\theoremstyle{plain}
\newtheorem{lema}{Lemma}[section]
\newtheorem{prop}[lema]{Proposition}
\newtheorem{teo}[lema]{Theorem}
\newtheorem{coro}[lema]{Corollary}
\newtheorem*{ida}{Proposition \ref{idaa}}
\newtheorem*{col}{Theorem \ref{coll}}
\theoremstyle{remark}
\newtheorem{obs}[lema]{Remark}
\theoremstyle{definition}
\newtheorem{defi}[lema]{Definition}
\newtheorem{ej}[lema]{Example}
\def\gammac{\searrow \! \! \! \! \! ^{\gamma} \: \: }
\def\ce{\mbox{$\searrow \! \! \! \! \! ^e \: \: $}}
\def\se{\mbox{$\diagup \! \! \! \! \: \! \: \searrow$}}
\def\k{\mathcal{K}}
\def\x{\mathcal{X}}
\def\etft0{\mathnormal{etfT_0}}
\def\S{\mathbb{S}}
\begin{document}

\title[One point reductions, h-regular CW-complexes and collapsibility]{One-point reductions of finite spaces, \\ h-regular CW-complexes and collapsibility}

\author[J.A. Barmak]{Jonathan Ariel Barmak}
\author[E.G. Minian]{Elias Gabriel Minian}

\address{Departamento  de Matem\'atica\\
 FCEyN, Universidad de Buenos Aires\\ Buenos
Aires, Argentina}

\email{jbarmak@dm.uba.ar}
\email{gminian@dm.uba.ar}

\begin{abstract}
We investigate one-point reduction methods of finite topological spaces. These methods allow one to study homotopy theory of cell complexes by means of elementary moves of their finite models. We also introduce the notion of h-regular CW-complex, generalizing the concept of regular CW-complex, and prove that the h-regular CW-complexes, which are a sort of combinatorial-up-to-homotopy objects, are modeled (up to homotopy) by their associated finite spaces. This is accomplished by generalizing a classical result of McCord on simplicial complexes. 
\end{abstract}

\subjclass[2000]{55U05, 55P15, 57Q05, 57Q10, 06A06, 52B70.}

\keywords{Finite Topological Spaces, Simplicial Complexes, Regular CW-complexes, Weak Homotopy Types, Collapses, Posets.}

\maketitle

\section{Introduction}

 Two independent and foundational papers on finite spaces of 1966, by M.C. McCord and R.E. Stong \cite{Mcc,Sto}, investigate the homotopy theory of finite spaces and their relationship with polyhedra.  McCord \cite{Mcc}  associates to a finite simplicial complex $K$, the finite $T_0$-space $\x(K)$ which corresponds to the poset of simplices of $K$ and proves that there is a weak homotopy equivalence $K\to \x(K)$. Conversely, one can associate to a given finite $T_0$-space $X$ the simplicial complex $\k(X)$ of its non-empty chains and a weak homotopy equivalence $\k(X)\to X$. In contrast to McCord's approach, Stong introduces a combinatorial method to describe the homotopy types of finite spaces. He defines the notions of \it linear \rm and \it colinear \rm points, which we call \it down \rm and \it up beat points \rm following Peter May's terminology,  and proves that two finite spaces have the same homotopy type if and only if one of them can be obtained from the other by adding or removing beat points.
Recently a series of notes by Peter May \cite{May,May2} caught our attention to finite spaces. In his notes, May discusses various basic problems from the perspective of finite spaces.
It is evident, from McCord's and Stong's papers and from May's notes, that finite topological spaces can be used to develop new techniques, based on their combinatorial and topological nature, to investigate homotopy theory of polyhedra. A nice example of this is Stong's paper of 1984 \cite{Sto2} where he restates Quillen's conjecture on the poset of non-trivial $p$-subgroups of a group \cite{Qui} in terms of finite spaces. Also in this direction, we showed in \cite{Bar2} how to use finite spaces to study simple homotopy types.

This article deals with one-point reductions. We investigate the cases in which removing a particular point of the space does not affect its homotopy, weak homotopy or simple homotopy 
type.
Our starting point is Theorem 3.10 of \cite{Bar2},  which relates simplicial collapses with collapses of finite spaces. More explicitly, we have proved in \cite{Bar2} that a collapse $X\searrow  Y$ between 
finite spaces induces a collapse $\k(X) \searrow \k(Y)$ between their associated
simplicial complexes and  a simplicial collapse $K \searrow L$ induces a collapse between the associated finite spaces. One advantage of working with finite spaces is that the elementary collapses in this context are very simple to handle and describe: 
they consist of removing a single point of the space,  which is called  a \it weak point. \rm The beat points introduced by Stong \cite{Sto} and the weak points defined in \cite{Bar2} constitute particular cases of one-point reductions. The main idea that is behind the one-point reductions is the idea of an \it elementary move, \rm which appears frequently in mathematics. Tietze transformations and Whitehead's theory on simple homotopy types are two leading exponents of this concept. The results that we obtain allow one to study homotopy theory of cell complexes by means of elementary moves of their \it finite models. \rm  A finite model of a CW-complex $K$ is a finite space which is weak homotopy equivalent to $K$.

In this paper we introduce the notions of $\gamma$-point and $\gamma$-collapse which provide a more general method of reduction. More precisely, we prove below the following

\begin{ida}
If $x\in X$ is a $\gamma$-point, the inclusion $i:X\smallsetminus \{x\} \hookrightarrow X$ is a weak homotopy equivalence.
\end{ida}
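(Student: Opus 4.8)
The plan is to transfer the problem to order complexes and invoke McCord's theorem. Recall that $x$ being a $\gamma$-point means that $\hat{C}_x=\hat{U}_x\cup\hat{F}_x$ is homotopically trivial, and that McCord produces, naturally in $X$, a weak homotopy equivalence $\mu_X:|\k(X)|\to X$. The inclusion $i:X\smallsetminus\{x\}\hookrightarrow X$ is order-preserving, so naturality yields a commutative square whose vertical maps $\mu_{X\smallsetminus\{x\}}$ and $\mu_X$ are weak equivalences by McCord and whose top horizontal map is the realization of the simplicial inclusion $\k(X\smallsetminus\{x\})\hookrightarrow\k(X)$. By the two-out-of-three property it then suffices to prove that this simplicial inclusion is a homotopy equivalence.

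Next I would analyze $\k(X)$ relative to the full subcomplex $L:=\k(X\smallsetminus\{x\})$. Every chain of $X$ either avoids $x$, and these are exactly the simplices of $L$, or contains $x$; a chain containing $x$ has the form $\{x\}\cup\sigma$ with $\sigma$ a chain whose elements are all comparable with $x$, i.e. $\sigma\subseteq\hat{C}_x$. Hence the simplices containing $x$, together with their faces, form the closed star $\overline{\mathrm{st}}(x)=x*\k(\hat{C}_x)$, a cone with apex $x$, and $L\cap\overline{\mathrm{st}}(x)=\k(\hat{C}_x)=\mathrm{lk}(x)$. Since every element of $\hat{U}_x$ lies below every element of $\hat{F}_x$, the poset $\hat{C}_x$ is an ordinal sum and $\k(\hat{C}_x)=\k(\hat{U}_x)*\k(\hat{F}_x)$; by the $\gamma$-point hypothesis this link is homotopically trivial, hence contractible by Whitehead's theorem, being a finite complex.

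It then remains to establish the elementary fact that attaching a cone to $L$ along a contractible subcomplex leaves the inclusion of $L$ a homotopy equivalence. Concretely, $\k(X)=L\cup_{\mathrm{lk}(x)}\overline{\mathrm{st}}(x)$ is a pushout along cofibrations; as $\overline{\mathrm{st}}(x)$ is contractible, collapsing it gives a homotopy equivalence $\k(X)\to\k(X)/\overline{\mathrm{st}}(x)=L/\mathrm{lk}(x)$, and as $\mathrm{lk}(x)$ is contractible with $\mathrm{lk}(x)\hookrightarrow L$ a cofibration, the quotient $L\to L/\mathrm{lk}(x)$ is a homotopy equivalence; chasing these identifications shows the composite agrees up to homotopy with the inclusion $L\hookrightarrow\k(X)$. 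I expect the main obstacle to be precisely this last gluing step, namely ensuring that the abstract equivalence is realized by the inclusion itself rather than merely that the two realizations are abstractly homotopy equivalent, since this is what the two-out-of-three argument consumes. It is worth recording why the more direct route through McCord's basis theorem fails: here $i^{-1}(U_y)=U_y\smallsetminus\{x\}$ is contractible for $y\neq x$ but equals $\hat{U}_x$ for $y=x$, and demanding that $\hat{U}_x$ be homotopically trivial is the strictly stronger condition defining a weak point. One is thus genuinely forced onto the order complex, where the relevant obstruction is the full link $\k(\hat{C}_x)$ instead of $\k(\hat{U}_x)$.
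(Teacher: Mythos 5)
Your proposal is correct and follows essentially the same route as the paper: the identical star/link decomposition $|\k(X)|=|\k(X\smallsetminus\{x\})|\cup_{|\k(\hat{C}_x)|}|\k(C_x)|$, with the $\gamma$-point hypothesis plus McCord and Whitehead giving contractibility of $\k(\hat{C}_x)$, and McCord's naturality square plus two-out-of-three transferring the conclusion back to the finite spaces. The only difference is cosmetic: where the paper invokes the gluing lemma (a homotopy equivalence pushed out along a cofibration is a homotopy equivalence), you collapse the contractible subcomplexes $\overline{\mathrm{st}}(x)$ and $\mathrm{lk}(x)$ and compare the resulting quotient maps, an equivalent standard cofibration argument.
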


This also improves an old result which appears for example in  \cite[Proposition 5.8]{Wal}. Moreover, we prove that the converse of Proposition \ref{idaa} also holds provided $x$ is neither maximal nor minimal (see Theorem \ref{vueltaa}). Therefore, the elementary $\gamma$-collapses describe almost all possible one-point reductions.

We also investigate collapsibility and $\gamma$-collapsibility of the joins $X\oplus Y$ of finite spaces in terms of the collapsibility of $X$ and $Y$. This sheds some light on the analogous problem for simplicial joins.

In the last section of the paper we introduce the concept of \textit{h-regular CW-complex}.
Recall that a CW-complex is called regular if the characteristic maps of its cells are homeomorphisms. It is known that if $K$ is a regular CW-complex, its face poset $\x (K)$ is a finite model 
of $K$. For general CW-complexes the associated finite space $\x (K)$ does not give relevant information about the topology of $K$. Regular CW-complexes can be thought of as combinatorial objects (in fact, they are in the middle way between simplicial complexes and general CW-complexes). This suggests that one can extend the class of combinatorial CW-complexes to a wider class of combinatorial-up-to-homotopy CW-complexes  that can be modeled, up to homotopy, by their associated finite spaces. This leads to the notion of \textit{h-regular CW-complex}.
 A CW-complex is h-regular if all its closed cells are contractible subcomplexes. In particular, regular 
complexes are h-regular.

We prove that if $K$ is a finite h-regular complex, there is a weak homotopy equivalence $K\to \x (K)$, generalizing McCord's result  
for finite simplicial complexes (compare with \cite{Bjo}).
The paper ends with the following result which relates collapses of h-regular complexes with  $\gamma$-collapses.
\begin{col}
Let $L$ be a subcomplex of an h-regular complex $K$. If $K\searrow L$, then $\x(K) \gammac \x(L)$.
\end{col}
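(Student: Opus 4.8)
The plan is to induct on the number of elementary collapses in $K\searrow L$ and, for a single elementary collapse, to realize it on the face poset by deleting two points in a prescribed order. First I would record two reductions. In a collapsing sequence $K=K_0\searrow K_1\searrow\dots\searrow K_m=L$ every $K_i$ is a subcomplex of $K$, and a subcomplex of an h-regular complex is again h-regular (its closed cells coincide with closed cells of $K$, hence are contractible). Since $\gammac$ is transitive, it therefore suffices to treat a single elementary collapse $K=L\cup\{\sigma,\tau\}$ in which $\sigma$ is a free face of $\tau$. Throughout, for $x\in\x(K)$ I write $\hat{U}_x=\{y:y>x\}$, $\hat{F}_x=\{y:y<x\}$ and $\hat{C}_x=\hat{U}_x\cup\hat{F}_x$.

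The key structural remarks are about this pair in $\x(K)$. Because $\sigma$ is a free face, $\tau$ is the only cell properly containing it, so $\hat{U}_\sigma=\{\tau\}$ has a minimum and $\sigma$ is a beat point of $\x(K)$; deleting it gives a strong collapse $\x(K)\searrow\x(K)\smallsetminus\{\sigma\}$, which is in particular a $\gamma$-collapse. By contrast $\tau$ is maximal in $\x(K)$ — any cell containing $\tau$ would also contain $\sigma$, contradicting freeness — and in $\x(K)$ one has $\hat{C}_\tau=\hat{F}_\tau=\x(\dot\tau)$, the face poset of the boundary $\dot\tau=\bar\tau\smallsetminus\{\tau\}$, which in general is not weakly contractible (it models the sphere $S^{\dim\tau-1}$ when $K$ is regular). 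Hence $\tau$ need not be a $\gamma$-point of $\x(K)$, and this is exactly why $\sigma$ must be removed \emph{first}.

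The heart of the argument is to show that, after deleting $\sigma$, the point $\tau$ has become a $\gamma$-point. Set $Y=\x(K)\smallsetminus\{\sigma\}$. Since $\tau$ is still maximal in $Y$, we get $\hat{C}_\tau=\hat{F}_\tau=\x(\dot\tau\smallsetminus\{\sigma\})$, the face poset of the h-regular complex obtained from $\dot\tau$ by deleting the now-maximal cell $\sigma$. I would then invoke h-regularity: $\bar\tau$ is contractible, and collapsing the free pair $(\sigma,\tau)$ gives $\bar\tau\searrow\dot\tau\smallsetminus\{\sigma\}$, so $\dot\tau\smallsetminus\{\sigma\}$ is contractible. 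Applying the weak homotopy equivalence $\dot\tau\smallsetminus\{\sigma\}\to\x(\dot\tau\smallsetminus\{\sigma\})$ established above for finite h-regular complexes, it follows that $\hat{C}_\tau$ is weakly contractible, so $\tau$ is a $\gamma$-point of $Y$ and Proposition \ref{idaa} applies. Thus $Y\gammac Y\smallsetminus\{\tau\}=\x(L)$; concatenating with the beat-point move yields $\x(K)\gammac\x(L)$, and iterating over the collapsing sequence proves the theorem.

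I expect the main obstacle to be precisely this last identification: recognizing, inside $Y$, that the link $\hat{C}_\tau$ is the face poset of $\dot\tau\smallsetminus\{\sigma\}$, and verifying that this complex is contractible. This is the only step where h-regularity (contractibility of the closed cell $\bar\tau$) and the generalized McCord theorem are genuinely used, and it is what simultaneously forces the passage from regular to h-regular complexes and dictates the order in which the two points are deleted.
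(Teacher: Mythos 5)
Your proposal is correct and follows essentially the same route as the paper's proof: reduce to a single elementary collapse $K=L\cup e^n\cup e^{n+1}$, remove the free face first as an (up) beat point of $\x(K)$, then observe that in $\x(K)\smallsetminus\{e^n\}$ the link of $e^{n+1}$ is $\x(\dot{e}^{n+1}\smallsetminus e^n)$, which is homotopically trivial because $\overline{e^{n+1}}\searrow \dot{e}^{n+1}\smallsetminus e^n$ and $\overline{e^{n+1}}$ is contractible by h-regularity, so that $e^{n+1}$ is a $\gamma$-point and $\x(K)\ce\x(K)\smallsetminus\{e^n\}\gammac\x(L)$. The only discrepancy is cosmetic: your conventions for $\hat{U}_x$ and $\hat{F}_x$ are swapped relative to the paper's, but your argument is internally consistent.
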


\section{Preliminaries}

In this section we recall the basic notions on finite spaces and their relationship with finite posets and simplicial complexes. For more details, we refer the reader to \cite{Bar2,May2, Mcc,Sto}.

Given a $T_0$-topology $\tau$ on a finite set $X$, we define for each $x\in X$ the (open) set $U_x$ as the intersection of all  open sets containing $x$. Recall that a topological space $X$ is called $T_0$ if for every pair of points in $X$ there exists some open set containing one and only one of them. The order associated to the $T_0$-topology $\tau$ is given by $x\le y$ if $x\in U_y$. 
This application establishes a one to one correspondence between $T_0$-topologies and order relations on the set $X$. Therefore we can regard finite $T_0$-spaces as finite posets and viceversa. 
It is not hard to see that a function is continuous if and only if it is order preserving. Moreover if $f,g:X\to Y$ are two maps such that $f(x)\le g(x)$ for every $x\in X$, they are homotopic.

The \textit{order complex} $\k (X)$ of a finite $T_0$-space $X$ is the simplicial complex whose simplices are the non-empty chains of $X$. It is also denoted $\Delta (X)$ by some authors. There exists a weak homotopy equivalence from the geometric realization $|\k (X)|$ to $X$, i.e. a map which induces isomorphisms in all homotopy groups \cite{Mcc}.

\begin{ej}
Consider the finite $T_0$-space $X$ represented by the Hasse diagram shown in Fig. 1. The order complex of $X$ is homeomorphic to the M\"obius Strip.

\begin{minipage}{4cm}
\begin{displaymath}
\xymatrix@C=14pt{ \bullet \ar@{-}[d] \ar@{-}[dr] & \bullet \ar@{-}[ld] \ar@{-}[rd] & \bullet \ar@{-}[ld] \ar@{-}[d] \\
								^a \bullet \ \ar@{-}[d] \ar@{-}[dr] & ^b \bullet \ \ar@{-}[ld] \ar@{-}[rd] & \ \bullet ^c \ar@{-}[ld] \ar@{-}[d] \\
		\bullet & \bullet & \bullet} 
\end{displaymath}
\center{\raisebox{-0.5ex}{Fig. 1: Hasse diagram}}
\center{of $X$}
\end{minipage}
\begin{minipage}{10cm}
\smallskip
\smallskip
\smallskip
\includegraphics[scale=0.4]{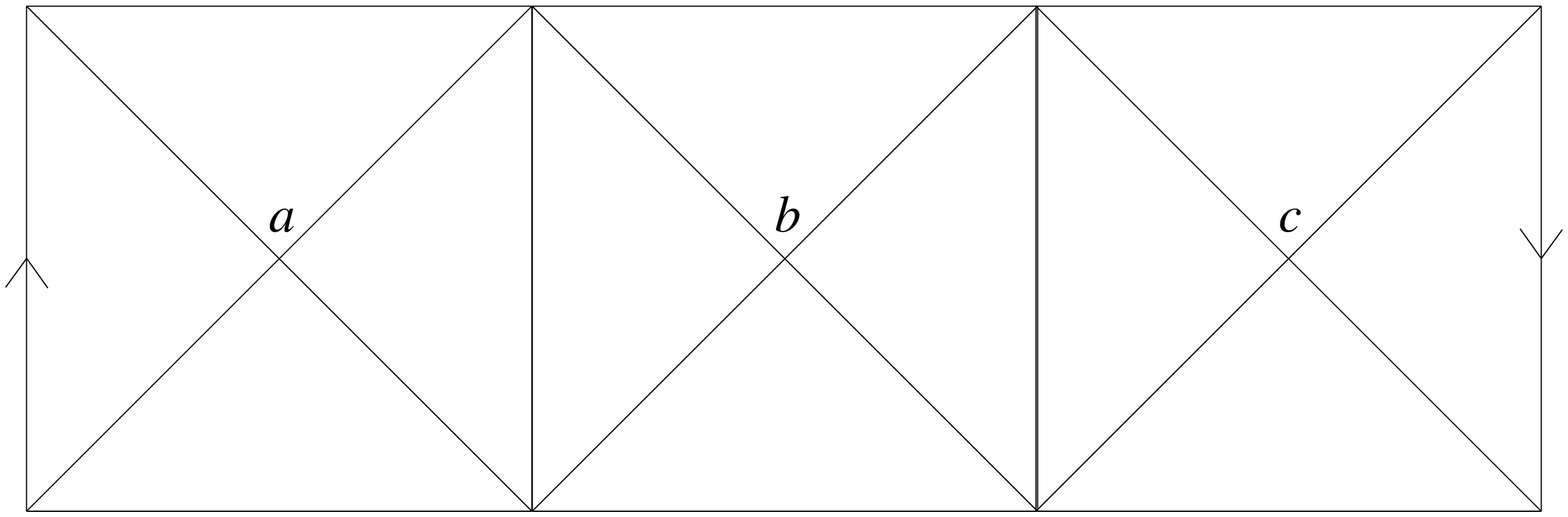}
\center{\raisebox{0.1ex}{Fig. 2: $\k (X)$ is a triangulation of the M\"obius Strip}}
\center{with twelve 2-simplices}
\end{minipage}
\end{ej}

The application $\k$ is also defined on maps, moreover if $f:X\to Y$ is a map between finite $T_0$-spaces, there is a commutative diagram

\begin{displaymath}
\xymatrix@C=20pt{ |\k (X)| \ar@{->}[d] \ar@{->}^{|\k(f)|}[r] & |\k (Y)| \ar@{->}[d] \\
									X \ar@{->}^f[r] & Y }
\end{displaymath}

Conversely one can associate a finite space (the \textit{face poset}) $\x (K)$ to each finite simplicial complex $K$ which is the poset of simplices of $K$ ordered by inclusion. 
Since $\k (\x(K))=K'$ is the barycentric subdivision of $K$, there exists a weak homotopy equivalence $|K|\to \x (K)$.

Let $X$ be a finite $T_0$-space. A point $x\in X$ is a \textit{down beat point} if the set $\hat{U}_x=U_x\smallsetminus \{x\}$ of points smaller than $x$ has a maximum, and it is an \textit{up beat point} if the set $\hat{F}_x=F_x\smallsetminus \{x\}$ of points which are greater than $x$ has a minimum. Here $F_x$ denotes the closure of $\{x\}$ in $X$. 
If $x$ is a beat point (down or up), $X\smallsetminus \{x\} \hookrightarrow X$ is a strong deformation retract. Moreover, $X$ is contractible(=dismantlable poset) if and only if one can remove beat points one at the time to obtain a space of one point \cite{Sto}.

Following \cite{Bar2}, we say that a point $x\in X$ is a \textit{weak point} if $\hat{U}_x$ or $\hat{F}_x$ is contractible. Note that this definition generalizes the definition of a beat point since any finite space with maximum or minimum is contractible.
In this case, the inclusion $X\smallsetminus \{x\} \hookrightarrow X$ need not be a homotopy equivalence, but it is a weak homotopy equivalence. Note that in the context of finite spaces, weak homotopy equivalences are not in general homotopy equivalences.

The notion of weak point gives rise to the following notion of collapse for finite spaces.

\begin{defi}
If $x\in X$ is a weak point, we say that $X$ \textit{collapses} to $X\smallsetminus \{x\}$ by an \textit{elementary collapse}. We denote this by $X\ce X\smallsetminus \{x\}$.
We say that $X$ \textit{collapses} to $Y$ (or $Y$ expands to $X$), and write $X\searrow Y$, if there is a sequence of elementary collapses which starts in $X$ and ends in $Y$. The space $X$ is \textit{collapsible} if it collapses to a point.
Finally, $X$ and $Y$ are \textit{simply equivalent}, denoted by $X\se Y$, if there exists a sequence of collapses and expansions that starts in $X$ and ends in $Y$. 
\end{defi}

In contrast with the classical situation, where a simple homotopy equivalence is a special kind of homotopy equivalence, homotopy equivalent finite spaces are simply equivalent.
The relationship between collapses of finite spaces and simplicial complexes is given by the following

\begin{teo}
\begin{enumerate} 
\item[ ]
\item[(a)] Let $X$ and $Y$ be finite $T_0$-spaces. Then, $X$ and $Y$ are simply equivalent if and only if $\k(X)$ and $\k (Y)$ have the same simple homotopy type. 
Moreover, if $X \searrow Y$ then $\k (X) \searrow \k(Y)$.
\item[(b)] Let $K$ and $L$ be finite simplicial complexes. Then, $K$ and $L$ are simple homotopy equivalent if and only if $\x(K)$ and $\x (L)$ are simply equivalent. Moreover, if $K \searrow L$ then $\x (K) \searrow \x (L)$.
\end{enumerate}
\end{teo}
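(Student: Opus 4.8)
The plan is to isolate the two ``moreover'' clauses as the substantive content and then derive both biconditionals from them together with the invariance of simple type under subdivision.

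\textbf{An elementary collapse of finite spaces induces a simplicial collapse (the ``moreover'' of (a)).} Let $x\in X$ be a weak point, say $\hat U_x$ contractible. Since $\k(X\smallsetminus\{x\})$ is exactly the full subcomplex of $\k(X)$ spanned by the remaining vertices, one has $\k(X)=\k(X\smallsetminus\{x\})\cup \overline{\mathrm{st}}(x)$ with $\overline{\mathrm{st}}(x)=x*\mathrm{lk}(x)$ and $\mathrm{lk}(x)=\k(\hat U_x)*\k(\hat F_x)$ (a chain through $x$ breaks into its part below and its part above $x$). I would use two purely simplicial facts: (A) if the link of a vertex $v$ is collapsible then $K\searrow K\smallsetminus v$, proved by lifting a collapse $\mathrm{lk}(v)\searrow$ point to a collapse of $v*\mathrm{lk}(v)$ onto $\mathrm{lk}(v)$ that removes only faces containing $v$; and (B) $\k(Z)$ is collapsible whenever $Z$ is a contractible finite space, proved by inducting on a dismantling of $Z$ and observing that at a down (resp.\ up) beat point $y$ the factor $\k(\hat U_y)$ (resp.\ $\k(\hat F_y)$) is a simplicial cone, so $\mathrm{lk}(y)$ is collapsible and (A) applies. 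Granting these, $\k(\hat U_x)$ is collapsible by (B), hence so is the join $\mathrm{lk}(x)=\k(\hat U_x)*\k(\hat F_x)$ (a join with a collapsible factor is collapsible), and (A) gives $\k(X)\searrow \k(X\smallsetminus\{x\})$. Iterating over a collapse $X\searrow Y$ yields $\k(X)\searrow\k(Y)$.

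\textbf{An elementary simplicial collapse induces a collapse of face posets (the ``moreover'' of (b)).} Suppose $K\searrow L$ removes a free pair $(\sigma,\tau)$, so that $\tau$ is maximal and $\sigma$ is its unique proper coface. In $\x(K)$ this makes $\hat F_\sigma=\{\tau\}$, a poset with minimum, so $\sigma$ is a weak point and $\x(K)\searrow\x(K)\smallsetminus\{\sigma\}$. In the remaining poset $\tau$ is still maximal, and $\hat U_\tau$ consists of the proper faces of $\tau$ other than $\sigma$. Writing $\tau=\sigma\cup\{v\}$, the assignment $\rho\mapsto\rho\cup\{v\}$ is a well-defined order-preserving self-map of $\hat U_\tau$ (a proper face distinct from $\sigma$ cannot be sent to $\tau$) lying above both the identity and the constant map at $\{v\}$; hence $\hat U_\tau$ is contractible and $\tau$ is a weak point. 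Removing $\sigma$ and then $\tau$ shows $\x(K)\searrow\x(L)$, and iteration gives the general statement.

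\textbf{Assembling the equivalences.} The two forward implications are immediate: a sequence of collapses and expansions joining $X$ to $Y$ (resp.\ $K$ to $L$) is carried by the ``moreover'' clauses to such a sequence joining $\k(X)$ to $\k(Y)$ (resp.\ $\x(K)$ to $\x(L)$). For the converses I invoke subdivision invariance: a finite space is simply equivalent to its barycentric subdivision, $X\se\x(\k(X))$, and, classically, a complex is simple homotopy equivalent to its barycentric subdivision $K'=\k(\x(K))$. Thus, if $\k(X)$ and $\k(Y)$ have the same simple homotopy type, applying the ``moreover'' of (b) step by step gives $\x(\k(X))\se\x(\k(Y))$, whence $X\se\x(\k(X))\se\x(\k(Y))\se Y$; symmetrically, if $\x(K)\se\x(L)$, applying the ``moreover'' of (a) shows $K'$ and $L'$ have the same simple homotopy type, and therefore so do $K$ and $L$.

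\textbf{Main obstacle.} The crux is the first ``moreover'' clause, and within it the simplicial lemma (B): the hypothesis on a weak point is the \emph{homotopical} contractibility of $\hat U_x$ or $\hat F_x$, whereas I need the \emph{combinatorial} collapsibility of the corresponding order complex, and passing from one to the other is exactly where the dismantling induction and the cone structure at beat points do the work. I also expect the finite-space subdivision invariance $X\se\x(\k(X))$ used above to require genuine effort---via the non-Hausdorff mapping cylinder of the weak equivalence $\x(\k(X))\to X$, which collapses onto each end---since simple equivalence is strictly finer than the weak homotopy equivalence furnished by McCord's theorem.
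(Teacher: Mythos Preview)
Your proposal is correct. The paper itself defers the full proof to \cite{Bar2} and only reproves the ``moreover'' clause of (a) in Section~3; for that piece your argument is essentially the paper's (Lemma~3.6 through Theorem~3.8): both show that $\mathrm{lk}(x)=\k(\hat C_x)$ is collapsible and then invoke the cone lemma (your fact (A), the paper's Lemma~3.6). The only cosmetic difference is that the paper first proves $\hat C_x=\hat U_x\oplus\hat F_x$ is itself contractible (Proposition~3.4 and Corollary~3.5) and then applies your fact (B) to it, whereas you apply (B) directly to the contractible factor $\hat U_x$ and then use that a simplicial join with a collapsible factor is collapsible. Your sketches for the ``moreover'' of (b) and for closing the biconditionals via the subdivision invariances $X\se\x(\k(X))$ and $|K|\sim|K'|$ are the standard arguments from \cite{Bar2}, and you have correctly flagged the finite-space statement $X\se\x(\k(X))$ (handled there via the non-Hausdorff mapping cylinder) as the genuinely nontrivial input.
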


The proof of this theorem can be found in \cite{Bar2}.
In this paper we give an alternative proof of the fact that a collapse of finite spaces induces a collapse between the associated complexes.

\section{$\gamma$-points and reduction methods}

In this section we delve deeper into the study of one-point reductions of finite spaces. As we pointed out above, we investigate the cases in which removing a particular point from a finite space does not affect its homotopy, weak homotopy or simple homotopy type.

Recall that the \textit{simplicial join} $K*L$ of two simplicial complexes $K$ and $L$  is the complex
$$K*L=K\cup L\cup \{\sigma\cup \tau| \ \sigma\in K, \tau\in L\}.$$
The cone $aK$ of a simplicial complex $K$ is the join of $K$ with a vertex $a\notin K$.
It is well known that for finite simplicial complexes $K$ and $L$, the geometric realization $|K*L|$ is homeomorphic to the topological join $|K|*|L|$.

There is an analogous construction for finite spaces.

\begin{defi}
The \textit{(non-Hausdorff) join} $X\oplus Y$ of two finite $T_0$-spaces $X$ and $Y$ is the disjoint union $X\sqcup Y$ keeping the giving ordering within $X$ and $Y$ and setting $x\leq y$ for every $x\in X$ and $y\in Y$. 
\end{defi}

Special cases of joins are the non-Hausdorff cone $\mathbb{C} (X)=X\oplus D^0$ and  the non-Hausdorff suspension $\S (X)=X\oplus S^0$   of any finite $T_0$-space $X$. Here $D^0$ denotes the singleton (0-cell) and $S^0$ the discrete space on two points (0-sphere).

\begin{obs}
$\k (X\oplus Y)=\k (X)* \k (Y)$.
\end{obs}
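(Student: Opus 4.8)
The plan is to verify directly that the two simplicial complexes $\k(X\oplus Y)$ and $\k(X)*\k(Y)$ have exactly the same simplices, both being families of subsets of the common vertex set $X\sqcup Y$. The whole argument rests on a single structural observation about chains in the non-Hausdorff join, after which the identification is pure bookkeeping.

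First I would establish the key point: a non-empty subset $C\subseteq X\oplus Y$ is a chain if and only if $C\cap X$ is a chain of $X$ and $C\cap Y$ is a chain of $Y$, where either intersection is allowed to be empty. For the forward implication, note that the order of $X\oplus Y$ restricted to $X$ coincides with the original order of $X$, so any two elements of $C\cap X$, being comparable in $X\oplus Y$, are comparable in $X$; hence $C\cap X$ is a chain of $X$, and symmetrically for $C\cap Y$. For the converse, take $u,v\in C$: if both lie in $X$ or both lie in $Y$ they are comparable by hypothesis, and if one lies in $X$ and the other in $Y$ they are comparable by the defining relation $x\le y$ of the join. Thus $C$ is totally ordered.

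With this in hand I would split the non-empty chains of $X\oplus Y$ according to whether $C\cap X$ and $C\cap Y$ are empty. Writing $\sigma=C\cap X$ and $\tau=C\cap Y$, the chains with $\tau=\emptyset$ are precisely the simplices of $\k(X)$, those with $\sigma=\emptyset$ are the simplices of $\k(Y)$, and those with both $\sigma$ and $\tau$ non-empty are exactly the sets $\sigma\cup\tau$ with $\sigma\in\k(X)$ and $\tau\in\k(Y)$. These three families are, by definition, the three pieces $\k(X)$, $\k(Y)$ and $\{\sigma\cup\tau\mid \sigma\in\k(X),\ \tau\in\k(Y)\}$ whose union is $\k(X)*\k(Y)$, so the two complexes coincide.

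I expect no serious obstacle here: the only point demanding care is the empty-chain bookkeeping, since the simplicial join is defined as a union of three separate families while the order complex is defined in one stroke as the complex of all non-empty chains. Keeping track of which of $\sigma$ and $\tau$ may be empty is exactly what reconciles the two descriptions.
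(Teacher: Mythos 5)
Your proof is correct: the paper states this identity as a remark without proof, and your direct verification (chains of $X\oplus Y$ are exactly the sets $\sigma\cup\tau$ with $\sigma$ a possibly empty chain of $X$, $\tau$ a possibly empty chain of $Y$, not both empty, since every point of $X$ lies below every point of $Y$) is precisely the routine argument the authors leave implicit. Your attention to the empty-chain bookkeeping, reconciling the three-piece definition of the simplicial join with the one-stroke definition of the order complex, is exactly the right point to be careful about.
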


Given a point $x$ in a finite $T_0$-space $X$, the \textit{star} $C_x$ of $x$ consists of the points which are comparable with $x$, i.e. $C_x=U_x\cup F_x$. Note that $C_x$ is always contractible since $1_{C_x}\le f \ge g$ where $f:C_x\to C_x$ is the map which is the identity on $F_x$ and the constant map $x$ on $U_x$, and $g$ is the constant map $x$.
The \textit{link} of $x$ is the subspace  $\hat{C}_x=C_x\smallsetminus \{x\}$. In case we need to specify the ambient space $X$, we will write $\hat{C}_x^X$. Note that $\hat{C}_x=\hat{U}_x \oplus \hat{F}_x$.

\begin{prop} \label{joincontr}
Let $X$ and $Y$ be finite $T_0$-spaces. Then $X \oplus Y$ is contractible if and only if  $X$ or $Y$ is contractible.
\end{prop}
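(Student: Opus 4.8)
The plan is to treat the two implications separately, both resting on the same computation of links inside a join. Directly from the definition of $\oplus$ one reads off four identities: for $x\in X$ the points smaller than $x$ in $X\oplus Y$ are exactly $\hat{U}_x^X$, while the points greater than $x$ form $\hat{F}_x^X\oplus Y$; dually, for $y\in Y$ one has $\hat{F}_y^{X\oplus Y}=\hat{F}_y^Y$ and $\hat{U}_y^{X\oplus Y}=X\oplus \hat{U}_y^Y$. I would record these first, since everything else follows by inspecting them. Throughout I assume both factors nonempty, the degenerate case being immediate.

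For sufficiency, suppose $Y$ is contractible, hence dismantlable. A down beat point of $Y$ has a nonempty $\hat{U}_y^Y$ with a maximum, and the identity $\hat{U}_y^{X\oplus Y}=X\oplus\hat{U}_y^Y$ shows that this maximum is still the maximum in the join; an up beat point of $Y$ stays one because $\hat{F}_y^{X\oplus Y}=\hat{F}_y^Y$. Thus every beat-point removal dismantling $Y$ to a point is also a beat-point removal in $X\oplus Y$, so the join deformation retracts onto $X\oplus\{y_0\}$, which has $y_0$ as a maximum and is therefore contractible. The case of $X$ contractible is symmetric and ends at $\{x_0\}\oplus Y$, which has a minimum.

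For necessity I would induct on the number of points of $X\oplus Y$. If either factor is a single point we are done, so assume $|X|,|Y|\ge 2$. Since $X\oplus Y$ is contractible it is dismantlable, hence has a beat point $z$; by symmetry assume $z\in X$. If $z$ is a down beat point, the identity $\hat{U}_z^{X\oplus Y}=\hat{U}_z^X$ already exhibits $z$ as a down beat point of $X$. If $z$ is an up beat point, then $\hat{F}_z^X\oplus Y=\hat{F}_z^{X\oplus Y}$ has a minimum, and here the dichotomy appears: when $\hat{F}_z^X\neq\emptyset$ the minimum must lie in $\hat{F}_z^X$ (every point of $\hat{F}_z^X$ is below every point of $Y$), so $z$ is an up beat point of $X$ too; when $\hat{F}_z^X=\emptyset$ we instead get $\hat{F}_z^{X\oplus Y}=Y$, so $Y$ has a minimum and is contractible, finishing the argument. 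In the remaining cases $z$ is a genuine beat point of $X$, so removing it is a strong deformation retract, $(X\smallsetminus\{z\})\oplus Y$ is still contractible and strictly smaller, and the inductive hypothesis gives that $X\smallsetminus\{z\}\simeq X$ or $Y$ is contractible.

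The only delicate point is the up beat case $z\in X$: one must notice that the beat-point condition in the join forces a global minimum on $Y$ precisely when $z$ is maximal in $X$, and this observation is what drives the whole necessity direction. Everything else is bookkeeping with the four link identities together with the two standard facts invoked above, namely that a finite space with a maximum or a minimum is contractible and that removing a beat point is a strong deformation retract.
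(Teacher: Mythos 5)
Your proof is correct and takes essentially the same approach as the paper: both directions rest on Stong's characterization of contractibility via beat-point removal, combined with the same key dichotomy (a beat point of $X\oplus Y$ either restricts to a beat point of its own factor, or it is maximal/minimal there and forces a minimum/maximum on the other factor). The only difference is organizational: you package the necessity direction as an induction on the number of points, while the paper analyzes the full dismantling sequence of $X\oplus Y$ at once; the underlying content is identical.
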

\begin{proof}
Assume $X$ is contractible. Then there exists a sequence of spaces $$X=X_n\supsetneq X_{n-1} \supsetneq \ldots \supsetneq X_1=\{x_1\}$$ with $X_i=\{x_1,x_2, \ldots ,x_i\}$ and such that $x_i$ is a beat point of $X_i$ for every $2\le i \le n$. Then $x_i$ is a beat point of $X_i \oplus Y$ for each $2 \le i \le n$ and therefore, $X\oplus Y$ deformation retracts to $\{x_1\} \oplus Y$ which is contractible. Analogously, if $Y$ is contractible, so is $X\oplus Y$.

Now suppose $X\oplus Y$ is contractible. Then there exists a sequence  $$X\oplus Y=X_n\oplus Y_n \supsetneq X_{n-1} \oplus Y_{n-1} \supsetneq \ldots \supsetneq X_1 \oplus Y_1=\{z_1\}$$ with $X_i\subseteq X$, $Y_i\subseteq Y$, $X_i\oplus Y_i=\{z_1,z_2 \ldots ,z_i\}$ such that $z_i$ is a beat point of $X_i\oplus Y_i$ for $i\ge 2$.

Let $i\ge 2$. If $z_i\in X_i$, $z_i$ is a beat point of $X_i$ unless it is a maximal point of $X_i$ and $Y_i$ has a minimum. In the same way, if $z_i \in Y_i$, $z_i$ is a beat point of $Y_i$ or $X_i$ has a maximum. Therefore, for each $2\le i \le n$, either $X_{i-1}\subseteq X_i$ and  $Y_{i-1}\subseteq Y_i$ are deformation retracts (in fact, one inclusion is an identity and the other inclusion is strict), or one of them,  $X_i$ or $Y_i$, is contractible. This proves that $X$  or $Y$ is contractible.
\end{proof}

\begin{coro} \label{linkcontr}
Let $X$ be a finite $T_0$-space. Then $x\in X$ is a weak point if and only if its link $\hat{C_x}$ is contractible.
\end{coro}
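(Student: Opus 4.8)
The plan is to obtain this as an immediate consequence of Proposition \ref{joincontr}, combined with the decomposition of the link as a non-Hausdorff join recorded just above the statement, namely $\hat{C}_x = \hat{U}_x \oplus \hat{F}_x$. Since, by definition, $x$ is a weak point precisely when $\hat{U}_x$ or $\hat{F}_x$ is contractible, and Proposition \ref{joincontr} asserts that a join $\hat{U}_x \oplus \hat{F}_x$ is contractible if and only if one of its two factors is, the two conditions literally coincide. Concretely, I would apply Proposition \ref{joincontr} with $X = \hat{U}_x$ and $Y = \hat{F}_x$ to conclude that $\hat{C}_x$ is contractible if and only if $\hat{U}_x$ or $\hat{F}_x$ is contractible, which is exactly the statement that $x$ is a weak point.

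Before invoking the join identity, I would reconfirm that $\hat{C}_x = \hat{U}_x \oplus \hat{F}_x$ holds as finite $T_0$-spaces: every point of $\hat{U}_x$ lies below $x$ and every point of $\hat{F}_x$ lies above $x$, so by transitivity through $x$ each point of $\hat{U}_x$ is smaller than each point of $\hat{F}_x$, while the orders within the two pieces are inherited; this is precisely the order defining the join. As this is already noted in the text, I would simply cite it.

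The only delicate point — and the place where the argument could slip — is the degenerate case in which $x$ is maximal or minimal, so that one of $\hat{U}_x$, $\hat{F}_x$ is empty. Here I would rely on the convention that the empty space is not contractible, which is consistent on both sides of the equivalence: if, say, $x$ is minimal, then $\hat{U}_x = \emptyset$ and $\hat{C}_x = \emptyset \oplus \hat{F}_x = \hat{F}_x$, so that $x$ is a weak point if and only if $\hat{F}_x$ is contractible if and only if $\hat{C}_x$ is contractible, in agreement with Proposition \ref{joincontr}. Once this compatibility is checked, I expect no genuine obstacle: with the join identity and the preceding proposition in hand, the corollary is immediate.
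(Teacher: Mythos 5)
Your proof is correct and is exactly the paper's intended argument: the corollary is stated without proof as an immediate consequence of Proposition \ref{joincontr} applied to the decomposition $\hat{C}_x=\hat{U}_x \oplus \hat{F}_x$, which is precisely what you do. Your extra care with the degenerate case where $\hat{U}_x$ or $\hat{F}_x$ is empty (under the convention that the empty space is not contractible) is a sensible check, and it is consistent with the paper's conventions.
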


In \cite{Bar2} we proved that a collapse $X\searrow Y$ of finite spaces induces a simplicial collapse $\k(X) \searrow \k(Y)$. 
We exhibit here an alternative proof of this result, using Corollary \ref{linkcontr} and the following easy lemma whose proof we omit.

\begin{lema} \label{mig}
Let $aK$ be a simplicial cone of a finite complex $K$. Then $K$ is collapsible if and only if $aK\searrow K$.
\end{lema}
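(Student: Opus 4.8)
The plan is to argue both implications directly from the cone structure of $aK$, whose simplices are the faces of $K$, the apex vertex $a$, and the joins $a\sigma=\{a\}\cup\sigma$ for $\sigma\in K$. The \emph{key observation} is that every proper coface of a cone simplex $a\sigma$ is again a cone simplex, namely one of the $a\rho$ with $\rho\supsetneq\sigma$; under the bijection $a\rho\leftrightarrow\rho$ these correspond exactly to the proper cofaces of $\sigma$ in $K$. Consequently, in any subcomplex of $aK$ that still contains all of the base $K$, the cone simplex $a\sigma$ is a free face if and only if $\sigma$ has a unique proper coface in the corresponding subcomplex of $K$.

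For the forward implication, suppose $K$ is collapsible, so there is a sequence of elementary collapses $K=K_0\searrow K_1\searrow\cdots\searrow K_n=\{v\}$, where $K_i$ is obtained from $K_{i-1}$ by deleting a free face $\sigma_i$ together with its unique coface $\tau_i$. I would mirror this sequence on the cone part only: starting from $aK$ and retaining every simplex of $K$, delete the pairs $(a\sigma_i,a\tau_i)$ in the same order. At the $i$-th step the surviving cone simplices are precisely the $a\rho$ with $\rho\in K_{i-1}$ (together with $a$), so by the key observation $a\sigma_i$ is free with unique coface $a\tau_i$ and the elementary collapse is legitimate. After these $n$ steps only the cone simplices $\{a\}$ and $\{a,v\}$ remain, and a final collapse of the free vertex $a$ through $\{a,v\}$ leaves exactly $K$. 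Hence $aK\searrow K$.

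For the converse, assume $aK\searrow K$. Since every base simplex survives to the end, no face of $K$ is ever removed, so each elementary collapse of the given sequence deletes a pair of cone simplices $(a\sigma_j,a\tau_j)$ with $\sigma_j\subsetneq\tau_j$ and $\sigma_j\in K\cup\{\emptyset\}$, and these pairs exhaust all the cone simplices. I would then transport this matching down to $K$: discard the unique pair with $\sigma_j=\emptyset$, which is of the form $(\{a\},\{a,v\})$ for some vertex $v$, and perform the remaining elementary collapses $(\sigma_j,\tau_j)$ on $K$ in the same relative order. Applying the key observation in reverse, each such $(\sigma_j,\tau_j)$ is free at its stage; the only discrepancy with the computation in $aK$ is that $v$ is never deleted from the base, but since $v\ne\emptyset$ it cannot be a proper coface of any nonempty $\sigma_j$, so the uniqueness of cofaces is unaffected. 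These collapses remove every face of $K$ except $\{v\}$, giving $K\searrow\{v\}$, so $K$ is collapsible.

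The bookkeeping of which simplices remain at each stage is routine; the one genuinely delicate point, present in both directions, is verifying that the free-face condition survives when the elementary collapses are transported between $aK$ and $K$ — that is, that reordering or omitting steps never destroys the uniqueness of a coface. The argument above isolates precisely this issue into the single structural fact that the proper cofaces of a cone simplex are themselves cone simplices in order-preserving bijection with the cofaces in $K$, together with the harmless bookkeeping of the apex vertex and the empty face.
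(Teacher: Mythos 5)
Your proof is correct. Note that the paper itself gives no argument to compare against: it states this as an ``easy lemma whose proof we omit,'' so your write-up in effect supplies the missing proof, and it does so along the standard lines (mirroring a collapse sequence of $K$ on the cone simplices via the bijection $\rho \leftrightarrow a\rho$, and transporting a collapse sequence $aK\searrow K$ back down to $K$). Your key observation is exactly right, and both directions go through.

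One simplification is available in the converse: the pair $(\{a\},\{a,v\})$ is necessarily the \emph{last} elementary collapse in any sequence $aK\searrow K$. Indeed, every intermediate stage of a collapse sequence is again a simplicial complex, so once the vertex $\{a\}$ has been removed no cone simplex $a\rho$ can survive (it would be a simplex present without its face $\{a\}$); hence all other cone pairs are removed before the apex pair. Consequently the ``discrepancy'' you guard against --- the vertex $v$ lingering in the base while later cone pairs are still being processed --- never actually arises, and the transported sequence $(\sigma_j,\tau_j)$ is literally the original sequence with its final step deleted. Your treatment of the hypothetical middle-of-sequence case is harmless and correct, just not needed.
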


We study first a particular case (cf. \cite[Theorem 3.3]{Osa}).

\begin{teo}\label{osaki}
If $x$ is a beat point of a finite $T_0$-space $X$, then $\k (X) \searrow \k (X \smallsetminus \{x\})$. In particular, if $X$ is contractible, $\k (X)$ is collapsible.
\end{teo}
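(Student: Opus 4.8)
The plan is to prove Theorem \ref{osaki} by reducing the statement to the case of a single elementary operation and then translating the combinatorics of the beat point into a simplicial collapse via the link. First I would observe that it suffices to treat the case where $x$ is a down beat point, since the up beat point case is entirely dual (replace $X$ by $X^{op}$, noting $\k(X^{op})=\k(X)$). So assume $\hat{U}_x$ has a maximum, say $z$. The key structural fact I would exploit is that $\k(X)$ decomposes relative to the vertex $x$: every simplex of $\k(X)$ either avoids $x$ or contains $x$, and the simplices containing $x$ are precisely those of the form $\{x\}\cup\sigma$ where $\sigma$ is a simplex of the link $\k(\hat{C}_x)$. In other words, $\k(X)$ is obtained from $\k(X\smallsetminus\{x\})$ by gluing in the cone $x*\k(\hat{C}_x)$ along $\k(\hat{C}_x)$.

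Next I would use the identification $\hat{C}_x=\hat{U}_x\oplus\hat{F}_x$ together with Remark (the observation that $\k(X\oplus Y)=\k(X)*\k(Y)$) to write $\k(\hat{C}_x)=\k(\hat{U}_x)*\k(\hat{F}_x)$. Because $x$ is a down beat point, $\hat{U}_x$ has a maximum $z$ and is therefore contractible; I would then invoke Theorem \ref{osaki} inductively—or more cleanly, use that a finite space with a maximum has a collapsible order complex—to conclude that $\k(\hat{U}_x)$ is collapsible. The central point is that collapsibility of one join factor forces collapsibility of the whole join: if $\k(\hat{U}_x)$ is collapsible then so is $\k(\hat{U}_x)*\k(\hat{F}_x)=\k(\hat{C}_x)$, since coning/joining with a collapsible complex preserves collapsibility (an elementary collapse of one factor induces one of the join). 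Thus $\k(\hat{C}_x)$ is collapsible.

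Now I would bring in Lemma \ref{mig}: since $\k(\hat{C}_x)$ is collapsible, the cone $x*\k(\hat{C}_x)$ collapses to $\k(\hat{C}_x)$, that is $x*\k(\hat{C}_x)\searrow \k(\hat{C}_x)$. The final step is to promote this collapse of the glued-in cone to a collapse of the ambient complex. Because the cone $x*\k(\hat{C}_x)$ is attached to $\k(X\smallsetminus\{x\})$ exactly along its base $\k(\hat{C}_x)$, and every free face and elementary collapse witnessing $x*\k(\hat{C}_x)\searrow\k(\hat{C}_x)$ involves only simplices containing $x$ (hence simplices not present in $\k(X\smallsetminus\{x\})$), the same sequence of elementary collapses carries $\k(X)$ down to $\k(X\smallsetminus\{x\})$. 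This yields $\k(X)\searrow\k(X\smallsetminus\{x\})$. The ``in particular'' clause then follows immediately: if $X$ is contractible it dismantles to a point by removing beat points one at a time (Stong), so iterating the displayed collapse gives $\k(X)\searrow\k(\ast)=\ast$, i.e.\ $\k(X)$ is collapsible.

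The main obstacle I expect is the last promotion step: verifying rigorously that a collapse of the subcomplex $x*\k(\hat{C}_x)$ onto its base extends to a collapse of the full complex $\k(X)$. One must check that each elementary collapse in the sequence—removing a free pair—remains a \emph{valid} elementary collapse inside $\k(X)$, i.e.\ that the face being deleted is still free when the larger ambient simplices (those not containing $x$) are present. This is where the precise decomposition $\k(X)=\k(X\smallsetminus\{x\})\cup\big(x*\k(\hat{C}_x)\big)$ with intersection exactly $\k(\hat{C}_x)$ is essential: since all the collapsing activity takes place among simplices containing $x$, which lie outside $\k(X\smallsetminus\{x\})$ and whose cofaces in $\k(X)$ again all contain $x$, freeness is preserved and no interference with the rest of $\k(X)$ occurs.
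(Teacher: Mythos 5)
Your proposal is correct and follows essentially the same route as the paper: decompose $\k(X)=\k(X\smallsetminus\{x\})\cup\bigl(x*\k(\hat{C}_x)\bigr)$ glued along $\k(\hat{C}_x)$, show the link $\k(\hat{C}_x)$ is collapsible, apply Lemma \ref{mig} to collapse the star onto the link, and promote that collapse to the ambient complex by the freeness argument you give. The only difference is local: the paper obtains collapsibility of the link more directly by writing $\hat{C}_x=Y\oplus\{x'\}\oplus Z$, where $x'$ is the point witnessing the beat condition, so that $\k(\hat{C}_x)=x'\k(Y\oplus Z)$ is itself a cone; this handles down and up beat points uniformly and avoids both your case split and the auxiliary (true, but not proved here) fact that joining with a collapsible complex preserves collapsibility.
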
  
\begin{proof}
Since $x$ is a beat point, there exists $x'\in X$ and subspaces $Y,Z\subseteq X$ such that $\hat{C}_x=Y\oplus \{x'\} \oplus Z$. Then the link $lk(x)$ of the vertex $x$ in $\k(X)$ is collapsible, since  $lk (x)= \k (\hat{C}_x) =x'\k(Y\oplus Z)$. By the previous lemma, the star  $st(x)=xlk(x)$ collapses to  $lk (x)=\k (X\smallsetminus \{x\})\cap st(x)$. Thus, $\k(X)=\k (X\smallsetminus \{x\})\cup st(x)\searrow \k (X\smallsetminus \{x\})$.
\end{proof}

\begin{teo}
If $X\searrow Y$, then $\k (X) \searrow \k (Y)$.
\end{teo}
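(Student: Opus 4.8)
The plan is to reduce to a single elementary collapse and then mimic the proof of Theorem \ref{osaki}, replacing the explicit description of the link of a beat point by the mere contractibility of the link of a weak point. Since $X\searrow Y$ means there is a finite sequence of elementary collapses, and simplicial collapses compose transitively, it suffices to treat one elementary collapse $X\ce X\smallsetminus \{x\}$, where $x$ is a weak point; the goal then becomes $\k(X)\searrow \k(X\smallsetminus \{x\})$.

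First I would set up the simplicial picture around the vertex $x$ in the order complex. Every chain of $X$ either avoids $x$ or has the form $\{x\}\cup c$ with $c$ a chain in $\hat{C}_x$; hence $\k(X)=\k(X\smallsetminus \{x\})\cup st(x)$, where the closed star $st(x)$ is the simplicial cone $x\,lk(x)$ over the link $lk(x)=\k(\hat{C}_x)=\k(\hat{U}_x)*\k(\hat{F}_x)$ (using the Remark that $\k$ carries non-Hausdorff joins to simplicial joins). Moreover the only simplices of $st(x)$ lying in $\k(X\smallsetminus \{x\})$ are the chains contained in $\hat{C}_x$, so $st(x)\cap \k(X\smallsetminus \{x\})=lk(x)$.

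The key input is that, because $x$ is a weak point, Corollary \ref{linkcontr} gives that $\hat{C}_x$ is contractible; the ``in particular'' clause of Theorem \ref{osaki} then shows that $lk(x)=\k(\hat{C}_x)$ is collapsible. By Lemma \ref{mig}, the cone $st(x)=x\,lk(x)$ collapses to its base $lk(x)$. This is precisely where the argument improves on Theorem \ref{osaki}: for a beat point one reads off collapsibility of $lk(x)$ from the fact that it is itself a cone, whereas here we know only that $\hat{C}_x$ is contractible and must route through Theorem \ref{osaki}.

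The step I expect to require the most care is gluing this collapse back into the whole complex. I would invoke the standard fact that a collapse $st(x)\searrow lk(x)$ extends to $\k(X)=\k(X\smallsetminus \{x\})\cup st(x)\searrow \k(X\smallsetminus \{x\})$ provided no simplex removed during the collapse lies in $\k(X\smallsetminus \{x\})$ and that the free faces used stay free after the gluing. Both hold here for a reason peculiar to stars: every simplex of $st(x)\smallsetminus lk(x)$ contains the vertex $x$, and any coface of such a simplex again contains $x$, hence also lies in $st(x)$ and not in $\k(X\smallsetminus \{x\})$. Therefore each elementary collapse inside the cone remains free inside $\k(X)$ and never touches $\k(X\smallsetminus \{x\})$, yielding $\k(X)\searrow \k(X\smallsetminus \{x\})$ and completing the induction.
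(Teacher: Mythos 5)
Your proof is correct and takes essentially the same route as the paper's: reduce to a single elementary collapse, use Corollary \ref{linkcontr} together with the ``in particular'' clause of Theorem \ref{osaki} to get collapsibility of $lk(x)=\k(\hat{C}_x)$, apply Lemma \ref{mig} to collapse $st(x)$ onto $lk(x)$, and glue to obtain $\k(X)\searrow \k(X\smallsetminus \{x\})$. The paper compresses the last step into ``the result follows as in the proof of Theorem \ref{osaki}''; your explicit verification that the freeness of faces is preserved under the gluing is exactly the detail being elided there.
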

\begin{proof}
We may assume that $Y=X\smallsetminus \{x\}$, where $x\in X$ is a weak point. By Corollary \ref{linkcontr}, $\hat{C}_x$ is contractible and then $\k (\hat{C}_x)$ is collapsible. Now the result follows as in the proof of Theorem \ref{osaki}. 
\end{proof}

Note that if $\hat{C}_x$ is collapsible (but not necessarily contractible), we also have that $\k (X)\searrow \k(X\smallsetminus \{x\})$.

It is known that if $K$ and $L$ are finite simplicial complexes and one of them is collapsible, then $K*L$ is also collapsible. As far as we know the converse of this result
 is an open problem (see \cite[(4.1)]{Wel}). In the setting of finite spaces, the analogous result and its converse hold. 

\begin{prop} \label{joincol}
Let $X$ and $Y$ be finite $T_0$-spaces. Then $X \oplus Y$ is collapsible if and only if $X$ or $Y$ is collapsible.
\end{prop}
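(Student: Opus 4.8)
The plan is to run both implications through Corollary \ref{linkcontr} (a point is a weak point exactly when its link is contractible) together with the already-proven Proposition \ref{joincontr}. The computation everything rests on is that links interact well with $\oplus$: for $z\in X$ viewed inside $X\oplus Y$ one has $U_z^{X\oplus Y}=U_z^{X}$ and $F_z^{X\oplus Y}=F_z^{X}\cup Y$, so that $\hat{C}_z^{X\oplus Y}=\hat{C}_z^{X}\oplus Y$, and symmetrically $\hat{C}_z^{X\oplus Y}=X\oplus \hat{C}_z^{Y}$ for $z\in Y$. Feeding this into Proposition \ref{joincontr} and Corollary \ref{linkcontr} yields the dichotomy I will use repeatedly: a point $z\in X$ is a weak point of $X\oplus Y$ if and only if $z$ is a weak point of $X$ or $Y$ is contractible, and likewise with the roles of $X$ and $Y$ exchanged.

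For the ``if'' direction, assume $X$ is collapsible, say $X=X_n\searrow X_{n-1}\searrow\dots\searrow X_1=\{x_1\}$ with $x_i$ a weak point of $X_i$. By the link identity $\hat{C}_{x_i}^{X_i\oplus Y}=\hat{C}_{x_i}^{X_i}\oplus Y$ is contractible, so each $x_i$ is still a weak point of $X_i\oplus Y$ and hence $X\oplus Y\searrow \{x_1\}\oplus Y$. The space $\{x_1\}\oplus Y$ has a minimum, so it is contractible and therefore collapsible (contractible finite spaces collapse by successively removing beat points). The case where $Y$ is collapsible is symmetric, collapsing instead to $X\oplus\{y_1\}$, which has a maximum.

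For the ``only if'' direction I will induct on $|X\oplus Y|$. If $X\oplus Y$ is collapsible it has a weak point $z$ with $(X\oplus Y)\smallsetminus\{z\}$ collapsible; say $z\in X$, so $(X\smallsetminus\{z\})\oplus Y$ is a strictly smaller collapsible join. By the inductive hypothesis $X\smallsetminus\{z\}$ or $Y$ is collapsible. If $Y$ is collapsible we are done. If $X\smallsetminus\{z\}$ is collapsible, I invoke the dichotomy above: either $z$ is a weak point of $X$, whence $X\searrow X\smallsetminus\{z\}$ and $X$ is collapsible, or $Y$ is contractible and hence collapsible. The case $z\in Y$ is handled symmetrically.

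The main obstacle is the lifting step in the ``only if'' direction: knowing that $z$ is a weak point of the join does \emph{not} by itself make $z$ a weak point of $X$, so one cannot simply rebuild a collapse of $X$ from a collapse of $X\smallsetminus\{z\}$. The link identity is exactly what splits this into two clean branches via Proposition \ref{joincontr}, one of which recovers the collapse of $X$ while the other outputs contractibility of $Y$. The remaining care is in the boundary situations of the induction, where one factor has been entirely consumed during the collapse (so that $X\smallsetminus\{z\}$ or $Y$ is empty); these must be checked to be sure the inductive hypothesis still applies.
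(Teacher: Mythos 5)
Your proof is correct and takes essentially the same route as the paper's: both directions rest on the link identity $\hat{C}_z^{X\oplus Y}=\hat{C}_z^{X}\oplus Y$ combined with Proposition \ref{joincontr} and Corollary \ref{linkcontr}, which is precisely how the paper argues (``we proceed as in Proposition \ref{joincontr}, replacing beat points by weak points and deformation retractions by collapses''). Your only deviation is organizational: you package the ``only if'' direction as an induction on $|X\oplus Y|$, peeling off one weak point at a time, whereas the paper unrolls the whole collapse sequence $X_n\oplus Y_n\supsetneq \dots \supsetneq X_1\oplus Y_1$ at once; the underlying dichotomy --- the removed point is a weak point of its own factor, or else the other factor is contractible and hence collapsible --- is identical.
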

\begin{proof}
We proceed as in Proposition \ref{joincontr}, replacing beat points by weak points and deformation retractions by collapses. Note that if $x_i$ is a weak point of $X_i$, then $x_i$ 
is also a weak point of $X_i\oplus Y$, since $\hat{C}_{x_i}^{X_i\oplus Y}=\hat{C}_{x_i}^{X_i}\oplus Y$ is contractible by Proposition \ref{joincontr}.

On the other hand, if $z_i$ is a weak point of $X_i \oplus Y_i$ and $z_i\in X_i$, then by Proposition \ref{joincontr}, $z_i$ is a weak point of $X_i$ or $Y_i$ is contractible.
\end{proof}

Corollary \ref{linkcontr} motivates the following definition.

\begin{defi}
A point $x$ of a finite $T_0$-space $X$ is a \textit{$\gamma$-point} if $\hat{C}_x$ is homotopically trivial (i.e. if all its homotopy groups are trivial). 
\end{defi}

Note that weak points are $\gamma$-points. It is not difficult to see that both notions coincide in spaces of height less than or equal to $3$. This is  because any space of height 2 is 
contractible if and only if it is homotopically trivial. However, this is false for spaces of height greater than $3$.

Let $x$ be a $\gamma$-point of a finite $T_0$-space $X$. We will show that the inclusion $X\smallsetminus \{x\}\hookrightarrow X$ is a weak homotopy equivalence. Note that since $\hat{U}_x$ and $\hat{F}_x$ need not be homotopically trivial, we cannot proceed as we did in \cite{Bar2}. However, in this case, one has the following pushout

\begin{displaymath}
\xymatrix@C=20pt{ |\k (\hat{C}_x)| \ar@{->}[r] \ar@{->}[d] & |\k(C_x)| \ar@{->}[d] \\
									|\k(X\smallsetminus \{x\})| \ar@{->}[r] & |\k(X)| } 
\end{displaymath}
Where $|\k (\hat{C}_x)| \to |\k(C_x)|$ is a homotopy equivalence and $|\k (\hat{C}_x)|\to |\k(X\smallsetminus \{x\})|$ satisfies the homotopy extension property. Therefore $|\k (X\smallsetminus \{x\})| \to |\k(X)|$ is a homotopy equivalence. This proves the following

\begin{prop} \label{idaa}
If $x\in X$ is a $\gamma$-point, the inclusion $i:X\smallsetminus \{x\} \hookrightarrow X$ is a weak homotopy equivalence.
\end{prop}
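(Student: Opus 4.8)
The plan is to transport the question to the order complexes via McCord's theorem and then analyze a single pushout. By naturality of McCord's weak homotopy equivalences $\mu_Z\colon|\k(Z)|\to Z$, applied to $Z=X$ and to $Z=X\smallsetminus\{x\}$, I obtain a commutative square
\begin{displaymath}
\xymatrix@C=20pt{ |\k (X\smallsetminus\{x\})| \ar@{->}[d] \ar@{->}[r] & |\k (X)| \ar@{->}[d] \\
X\smallsetminus\{x\} \ar@{->}^i[r] & X }
\end{displaymath}
whose vertical arrows are weak homotopy equivalences. Thus it suffices to show that the top map $|\k(X\smallsetminus\{x\})|\to|\k(X)|$ is a homotopy equivalence; the statement for $i$ then follows from the two-out-of-three property for weak equivalences.

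To understand the top map I decompose $\k(X)$ according to whether a chain contains $x$. A chain avoiding $x$ is a simplex of $\k(X\smallsetminus\{x\})$, while a chain containing $x$ consists of points all comparable with $x$ and is therefore a simplex of $\k(C_x)$; hence $\k(X)=\k(X\smallsetminus\{x\})\cup\k(C_x)$, and the two pieces meet exactly in $\k(\hat{C}_x)$. Realizing this produces the pushout square recorded before the statement. The point to record here is that $\k(C_x)=x\,\k(\hat{C}_x)$ is the simplicial cone with apex $x$ over the link, since $x$ is comparable with every point of $C_x$ and so may be adjoined to any chain of $\hat{C}_x$.

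The homotopy-theoretic input is the $\gamma$-point hypothesis. Since $\hat{C}_x$ is homotopically trivial, $|\k(\hat{C}_x)|$ is a weakly contractible CW-complex, hence contractible by Whitehead's theorem; and $|\k(C_x)|$ is contractible because it is a cone. Consequently the inclusion $|\k(\hat{C}_x)|\to|\k(C_x)|$, being a map between contractible spaces, is a homotopy equivalence, while $|\k(\hat{C}_x)|\hookrightarrow|\k(X\smallsetminus\{x\})|$ is a subcomplex inclusion and therefore a cofibration.

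It remains to feed this into the gluing lemma: in a pushout square, the pushout of a homotopy equivalence along a cofibration is again a homotopy equivalence. Applied to our pushout, this shows that $|\k(X\smallsetminus\{x\})|\to|\k(X)|$ is a homotopy equivalence, which finishes the argument. I expect the only genuinely delicate step, rather than routine bookkeeping, to be the passage from homotopically trivial to contractible via Whitehead's theorem; this is precisely what lets the argument succeed even though $\hat{U}_x$ and $\hat{F}_x$ need not themselves be homotopically trivial, so that the beat-point style reasoning of \cite{Bar2} is unavailable.
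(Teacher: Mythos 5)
Your proposal is correct and takes essentially the same route as the paper's own proof: the same decomposition of $\k(X)$ into $\k(X\smallsetminus\{x\})$ and the cone $\k(C_x)$ glued along $\k(\hat{C}_x)$, the same pushout of realizations, and the same gluing-lemma conclusion that $|\k(X\smallsetminus\{x\})|\to|\k(X)|$ is a homotopy equivalence. You simply make explicit the steps the paper leaves implicit, namely Whitehead's theorem to pass from homotopically trivial to contractible for $|\k(\hat{C}_x)|$, and McCord naturality plus two-out-of-three to descend from the order complexes to the finite spaces.
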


As we mentioned in the introduction, this result improves an old result which appears for example in Walker's Thesis \cite[Proposition 5.8]{Wal}, which asserts, in the language of finite spaces, that $X\smallsetminus \{x\} \hookrightarrow X$ is a weak homotopy equivalence provided $\hat{U}_x$ or $\hat{F}_x$ is homotopically trivial. By Proposition \ref{join} below, it is clear that a point $x$ is a $\gamma$-point if $\hat{U}_x$ or $\hat{F}_x$ is homotopically trivial, but the converse is false. 

We will show that the converse of Proposition \ref{idaa} is true in most cases. First, we need some results.

\begin{prop} \label{homo}
Let $x$ be a point of a finite $T_0$-space $X$. The inclusion $i:X\smallsetminus \{x\} \hookrightarrow X$ induces isomorphisms in all homology groups if and only if the subspace $\hat{C}_x$ is acyclic.
\end{prop}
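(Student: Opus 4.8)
The plan is to translate the statement into simplicial homology via the order complex and then run a Mayer--Vietoris argument, reusing the decomposition already set up in the proof of Proposition \ref{idaa}. The key combinatorial observation is that the chains of $X$ split according to whether or not they contain $x$: a chain avoiding $x$ is a chain of $X\smallsetminus\{x\}$, while a chain containing $x$ consists of points all comparable with $x$ and hence is a chain of the star $C_x$. This gives a decomposition of $\k(X)$ as a union of subcomplexes,
$$\k(X)=\k(X\smallsetminus\{x\})\cup \k(C_x),\qquad \k(X\smallsetminus\{x\})\cap\k(C_x)=\k(\hat{C}_x),$$
which is exactly the pushout square displayed just before Proposition \ref{idaa}. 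Moreover, since $x$ is comparable with every point of $C_x$, the complex $\k(C_x)$ is the simplicial cone $x\,\k(\hat{C}_x)$, so $|\k(C_x)|$ is contractible and in particular acyclic.

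Next I would feed this union into the reduced Mayer--Vietoris sequence arising from the short exact sequence of simplicial chain complexes
$$\cdots \to \tilde{H}_n(|\k(\hat{C}_x)|) \to \tilde{H}_n(|\k(X\smallsetminus\{x\})|)\oplus \tilde{H}_n(|\k(C_x)|) \to \tilde{H}_n(|\k(X)|)\to \tilde{H}_{n-1}(|\k(\hat{C}_x)|)\to\cdots$$
Using $\tilde{H}_*(|\k(C_x)|)=0$, this collapses to a long exact sequence relating only $\tilde{H}_*(|\k(\hat{C}_x)|)$, $\tilde{H}_*(|\k(X\smallsetminus\{x\})|)$ and $\tilde{H}_*(|\k(X)|)$, in which the middle map is induced by the inclusion $j:\k(X\smallsetminus\{x\})\hookrightarrow\k(X)$. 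Invoking McCord's natural weak homotopy equivalences $|\k(Z)|\to Z$ for $Z\in\{X,\,X\smallsetminus\{x\},\,\hat{C}_x\}$ identifies these reduced simplicial homologies with the homology of the corresponding finite spaces, and by naturality identifies $j_*$ with $i_*$.

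The equivalence is then pure homological algebra. If $\hat{C}_x$ is acyclic, then $\tilde{H}_*(|\k(\hat{C}_x)|)=0$, so in the collapsed sequence each map $i_*$ is squeezed between two zero groups and is therefore an isomorphism. Conversely, if every $i_*$ is an isomorphism, exactness forces both the connecting homomorphisms and the maps out of $\tilde{H}_*(|\k(\hat{C}_x)|)$ to vanish, whence $\tilde{H}_*(|\k(\hat{C}_x)|)=0$ and $\hat{C}_x$ is acyclic.

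I do not expect a serious obstacle here: all the geometric content lies in the pushout and in the observation that $\k(C_x)$ is a cone, both already available from Proposition \ref{idaa}. The only points demanding a little care are the bookkeeping between reduced and unreduced homology (so that ``acyclic'' matches ``$i_*$ an isomorphism in every degree''), the degenerate case $\hat{C}_x=\emptyset$ where $x$ is an isolated point, and verifying that the naturality of McCord's map genuinely identifies $j_*$ with $i_*$.
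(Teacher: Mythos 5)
Your proposal is correct and takes essentially the same route as the paper: the paper's entire proof reads ``Apply the Mayer--Vietoris sequence to the triple $(\k (X) ; \k(C_x), \k(X\smallsetminus \{x\}))$,'' which is exactly the decomposition $\k(X)=\k(X\smallsetminus\{x\})\cup \k(C_x)$ with intersection $\k(\hat{C}_x)$ and cone $\k(C_x)=x\,\k(\hat{C}_x)$ that you spell out. Your added care about the McCord identification, reduced versus unreduced homology, and the case $\hat{C}_x=\emptyset$ simply makes explicit what the paper leaves implicit.
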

\begin{proof}
Apply the Mayer-Vietoris sequence to the triple $(\k (X) ; \k(C_x), \k(X\smallsetminus \{x\}))$.
\end{proof}

\begin{obs} \label{libre}
If $X$ and $Y$ are non-empty finite $T_0$-spaces with $n$ and $m$ connected components respectively, the fundamental group $\pi_1(X\oplus Y)$ is a free product of $(n-1)(m-1)$ copies of $\mathbb{Z}$. In particular if $x\in X$ is neither maximal nor minimal, the fundamental group of $\hat{C}_x=\hat{U}_x \oplus \hat{F}_x$ is a free group.
\end{obs}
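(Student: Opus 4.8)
The plan is to reduce the computation to the ordinary topological join of the order complexes and then to a van Kampen argument governed by a complete bipartite graph. First I would invoke the identity $\k(X\oplus Y)=\k(X)*\k(Y)$ together with McCord's weak homotopy equivalences $|\k(X\oplus Y)|\to X\oplus Y$, so that $\pi_1(X\oplus Y)\cong\pi_1\big(|\k(X)|*|\k(Y)|\big)$, the right-hand side being the fundamental group of the join of the two polyhedra $A:=|\k(X)|$ and $B:=|\k(Y)|$. Since $|\k(Z)|\to Z$ induces a bijection on path components, $A$ has exactly $n$ path components $A_1,\dots,A_n$ and $B$ has exactly $m$ path components $B_1,\dots,B_m$. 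Thus it suffices to prove the purely topological statement: if $A,B$ are nonempty complexes with $n$ and $m$ path components, then $\pi_1(A*B)$ is free of rank $(n-1)(m-1)$.

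Next I would exhibit the cellular structure of the join. One has $A*B=\bigcup_{i,j}A_i*B_j$, where the pieces meet according to the pattern $(A_i*B_j)\cap(A_i*B_{j'})=A_i$ for $j\neq j'$, $(A_i*B_j)\cap(A_{i'}*B_j)=B_j$ for $i\neq i'$, and $(A_i*B_j)\cap(A_{i'}*B_{j'})=\varnothing$ for $i\neq i'$, $j\neq j'$ (recall $A_i$ sits at one end of the join and $B_j$ at the other, so $A_i\cap B_j=\varnothing$). Each piece $A_i*B_j$ is the join of two nonempty path-connected complexes and is therefore simply connected, since the join raises connectivity: $\mathrm{conn}(P*Q)\ge \mathrm{conn}(P)+\mathrm{conn}(Q)+2$. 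The combinatorics of the gluing is recorded by the complete bipartite graph $K_{n,m}$, whose vertices are the $A_i$ and $B_j$ and whose edges are the pieces $A_i*B_j$.

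I would then build $A*B$ one piece at a time along an ordering of the edges of $K_{n,m}$ chosen so that every new edge shares a vertex with the union of the previous ones (possible since $K_{n,m}$ is connected), applying van Kampen at each step. When the added piece $A_i*B_j$ attaches a new vertex, its intersection with the previous union is the single connected space $A_i$ or $B_j$; as that subspace already lies inside an earlier simply connected piece, its $\pi_1$ maps trivially and no new generator appears. These edges form a spanning tree ($n+m-1$ of them). When the added piece closes a cycle, both $A_i$ and $B_j$ are already present, so the intersection is the disconnected space $A_i\sqcup B_j$; van Kampen for a two-component intersection, with all amalgamating images trivial, then enlarges the group by a single free factor $\mathbb{Z}$. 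Since $K_{n,m}$ has $nm$ edges and a spanning tree has $n+m-1$ of them, exactly $nm-(n+m-1)=(n-1)(m-1)$ cycle-closing edges occur, whence $\pi_1(A*B)$ is free of rank $(n-1)(m-1)$. Finally, if $x$ is neither maximal nor minimal then $\hat U_x$ and $\hat F_x$ are both nonempty, so $\hat C_x=\hat U_x\oplus\hat F_x$ is a join of nonempty finite $T_0$-spaces and its fundamental group is free by the formula just established.

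I would expect the main obstacle to be the bookkeeping in the van Kampen step: one must verify that the amalgamating subgroups (the images of $\pi_1(A_i)$ and $\pi_1(B_j)$) really do act trivially, so that each cycle-closing edge contributes a genuine free $\mathbb{Z}$ rather than an amalgamated or HNN complication, and one must treat the disconnected intersections correctly — most cleanly via the groupoid version of van Kampen, or equivalently via the graph-of-groups description in which all vertex and edge groups are trivial. Everything else, namely the reduction to polyhedra, the intersection pattern of the pieces, and the count of tree versus non-tree edges, is routine.
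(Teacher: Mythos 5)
Your proof is correct, but there is nothing in the paper to compare it against: the statement appears there as a Remark with no proof at all, treated as a standard fact (Milnor's paper on joins is cited only in connection with the related simple-connectivity assertion a few lines later). So what you have done is supply a missing argument, and it is sound: the reduction to the polyhedral join via $\k(X\oplus Y)=\k(X)*\k(Y)$ and McCord's theorem, the intersection pattern of the pieces $A_i*B_j$, the triviality of the amalgamating images (each $A_i$ or $B_j$ already lies in an earlier simply connected piece, so its fundamental group maps trivially into the union built so far), and the count of tree versus cycle-closing edges of $K_{n,m}$ are all correct; the groupoid version of van Kampen, which you invoke, is indeed the clean way to handle the two-component intersections. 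For comparison, the quickest standard route---presumably the one the authors had in mind---is Milnor's homotopy equivalence $A*B\simeq \Sigma(A\wedge B)$ for CW complexes: choosing basepoints in components $A_{i_0}$ and $B_{j_0}$, the smash $A\wedge B$ has exactly $1+(n-1)(m-1)$ path components (the components $A_i\times B_j$ with $i\neq i_0$ and $j\neq j_0$ survive, while the remaining $n+m-1$ are absorbed into the basepoint component), and the suspension of a space with $k$ path components has free fundamental group of rank $k-1$, itself a small van Kampen computation with the two cones. Your argument is longer but more self-contained and makes the graph-of-groups structure explicit; both routes ultimately rest on the same van-Kampen-with-disconnected-intersection phenomenon.
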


\begin{teo} \label{vueltaa}
Let $X$ be a finite $T_0$-space, and $x\in X$ a point which is neither maximal nor minimal and such that $X\smallsetminus \{x\} \hookrightarrow X$ is a weak homotopy equivalence. Then $x$ is a $\gamma$-point.
\end{teo}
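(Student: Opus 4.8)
The plan is to upgrade the hypothesis from a statement about weak homotopy equivalence to the homotopical triviality of the link $\hat{C}_x$, in three steps: extract acyclicity, then simple connectivity, then invoke Hurewicz. First I would use only the homological consequence of the hypothesis. Since $X\smallsetminus\{x\}\hookrightarrow X$ is a weak homotopy equivalence, it induces isomorphisms on all homology groups; by Proposition \ref{homo} this is equivalent to $\hat{C}_x$ being acyclic. In particular $H_1(\hat{C}_x)=0$.

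The key step is the passage to simple connectivity. Because $x$ is neither maximal nor minimal, both $\hat{U}_x$ and $\hat{F}_x$ are non-empty, so $\hat{C}_x=\hat{U}_x\oplus\hat{F}_x$ is a genuine non-Hausdorff join. By Remark \ref{libre}, $\pi_1(\hat{C}_x)$ is therefore a free group (a free product of finitely many copies of $\mathbb{Z}$). A free group whose abelianization vanishes must be trivial; since $H_1(\hat{C}_x)$ is the abelianization of $\pi_1(\hat{C}_x)$ and equals $0$, we conclude $\pi_1(\hat{C}_x)=1$. Equivalently, in the notation of Remark \ref{libre}, $H_1=0$ forces the rank $(n-1)(m-1)$ to vanish, so the free product is empty. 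Thus $\hat{C}_x$ is simply connected.

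Finally, a space that is simply connected and acyclic is weakly contractible. Applying the Hurewicz theorem to the CW-complex $|\k(\hat{C}_x)|$, which is weak homotopy equivalent to $\hat{C}_x$ by McCord's theorem, all its homotopy groups vanish. Hence $\hat{C}_x$ is homotopically trivial, which is precisely the assertion that $x$ is a $\gamma$-point.

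The main obstacle, and really the heart of the argument, is the implication \emph{acyclic} $\Rightarrow$ \emph{homotopically trivial}, which is false for general spaces. What makes it work here is the freeness of $\pi_1(\hat{C}_x)$ supplied by Remark \ref{libre}, and this is exactly where the assumption that $x$ is neither maximal nor minimal enters: it guarantees that the link is a nondegenerate join, forcing its fundamental group to be free rather than arbitrary, so that vanishing first homology already kills $\pi_1$. The remaining ingredients—the homology input from the weak equivalence and the Hurewicz step—are routine.
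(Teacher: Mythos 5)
Your proposal is correct and follows essentially the same route as the paper's proof: acyclicity of $\hat{C}_x$ via Proposition \ref{homo}, triviality of $\pi_1(\hat{C}_x)$ by combining $H_1=0$ with the freeness supplied by Remark \ref{libre} (the paper phrases this as ``a perfect free group is trivial''), and then the Hurewicz theorem. The only difference is expository: you spell out the role of path-connectedness of the join and the abelianization argument that the paper leaves implicit.
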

\begin{proof}
If $X\smallsetminus \{x\} \hookrightarrow X$ is a weak homotopy equivalence, $\hat{C}_x$ is acyclic by Proposition \ref{homo}. Then $\pi_1(\hat{C}_x)$ is a perfect group and therefore trivial, by Remark \ref{libre}. Now the result follows from the Hurewicz Theorem. 
\end{proof}

The theorem fails if $x$ is maximal or minimal as the next example shows.

\begin{ej}
Let $X$ be an acyclic finite $T_0$-space with non-trivial fundamental group. Let $\S (X)=X\cup \{-1,1\}$ be its non-Hausdorff suspension. Then $\S (X)$ is also acyclic and $\pi_1(\S (X))=0$. Therefore it is homotopically trivial. Hence, $X\cup \{1\} \hookrightarrow \S (X)$ is a weak homotopy equivalence. However $-1$ is not a $\gamma$-point of $\S (X)$.
\end{ej}
 
Using the relativity principle of simple homotopy theory \cite[(5.3)]{Coh} one can prove that if $x$ is a $\gamma$-point, $|\k (X\smallsetminus \{x\})| \to |\k(X)|$ is a simple homotopy equivalence. In fact this holds whenever $X\smallsetminus \{x\} \hookrightarrow X$ is a weak homotopy equivalence. 
 
\begin{teo} \label{inducesimple}
Let $X$ be a finite $T_0$-space and let $x\in X$. If the inclusion $i:X\smallsetminus \{x\} \hookrightarrow X$ is a weak homotopy equivalence, it induces a simple homotopy equivalence $|\k (X\smallsetminus \{x\})|\to |\k (X)|$. In particular $X\smallsetminus \{x\} \se X$.
\end{teo}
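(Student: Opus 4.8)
The plan is to prove that the induced cellular map $j:|\k(X\smallsetminus\{x\})|\to|\k(X)|$ is a homotopy equivalence with trivial Whitehead torsion, and then to descend the conclusion to the finite spaces by means of \cite{Bar2}. First I would record that $j$ is a homotopy equivalence: by the naturality of McCord's weak equivalence $|\k(\,\cdot\,)|\to(\,\cdot\,)$ (Section 2) applied to $i$, together with the two-out-of-three property, $j$ is a weak homotopy equivalence between finite CW-complexes, hence a homotopy equivalence by Whitehead's theorem. Since $|\k(X\smallsetminus\{x\})|$ is a subcomplex of $|\k(X)|$ and $j$ is its inclusion, the torsion $\tau(j)$ is the Whitehead torsion of the pair, i.e. the torsion of the based $\mathbb{Z}[\pi]$-chain complex $C_*\big(|\k(X)|,|\k(X\smallsetminus\{x\})|\big)$ (here $\pi=\pi_1(X)$), which is acyclic precisely because $j$ is a homotopy equivalence. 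Everything reduces to showing $\tau(j)=0$.

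Next I would identify this relative complex. The pushout square preceding Proposition \ref{idaa} exhibits $|\k(X)|$ as $|\k(X\smallsetminus\{x\})|\cup|\k(C_x)|$ with intersection $|\k(\hat C_x)|$, so by excision $C_*\big(|\k(X)|,|\k(X\smallsetminus\{x\})|\big)\cong C_*\big(|\k(C_x)|,|\k(\hat C_x)|\big)$. The simplices of $\k(X)$ lying outside $\k(X\smallsetminus\{x\})$ are exactly the chains containing $x$, and all of them belong to the star $\k(C_x)=x\,\k(\hat C_x)$, which is a simplicial cone; in particular $|\k(C_x)|$ is simply connected (it is even contractible, since $C_x$ is contractible). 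Moreover, as $i$ is a weak homotopy equivalence it induces isomorphisms on homology, so by Proposition \ref{homo} the link $\hat C_x$ is acyclic, i.e. $|\k(\hat C_x)|$ has trivial reduced integral homology.

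The heart of the argument, which realizes the relativity principle \cite[(5.3)]{Coh}, is that these relative cells carry no torsion because they sit in the simply connected star. Since $\pi_1(|\k(C_x)|)\to\pi$ is trivial, the restriction of the universal cover of $|\k(X)|$ over $|\k(C_x)|$ (and hence over $|\k(\hat C_x)|$) is a trivial product; choosing lifts coherently into a single sheet, the boundary matrices of $C_*\big(|\k(C_x)|,|\k(\hat C_x)|;\mathbb{Z}[\pi]\big)$ have entries in $\mathbb{Z}\cdot 1$, so this based complex is $\mathbb{Z}[\pi]\otimes_{\mathbb{Z}}C_*\big(|\k(C_x)|,|\k(\hat C_x)|;\mathbb{Z}\big)$. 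The integral complex is acyclic because $\hat C_x$ is acyclic, and its torsion lies in $\mathrm{Wh}(1)=0$; under the base-change homomorphism $\mathrm{Wh}(1)\to\mathrm{Wh}(\pi)$ it maps to $0$. Therefore $\tau(j)=0$, so $j$ is a simple homotopy equivalence. Consequently $\k(X\smallsetminus\{x\})$ and $\k(X)$ have the same simple homotopy type, and the theorem of \cite{Bar2} relating simple equivalence of finite spaces with the simple homotopy type of their order complexes gives $X\smallsetminus\{x\}\se X$.

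I expect the main obstacle to be the torsion computation of the third paragraph, and specifically the discipline of not invoking contractibility of the link $|\k(\hat C_x)|$: this holds for $\gamma$-points but fails when $x$ is maximal or minimal (as in the non-Hausdorff suspension example), where the link is only acyclic. The robust mechanism is that all new cells lie in the simply connected star, so the torsion is pulled back from the trivial group; this is exactly why acyclicity of the link, rather than contractibility, is enough.
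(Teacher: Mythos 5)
Your proposal is correct, but it takes a genuinely different route from the paper. The paper's proof is essentially a two-line citation: since $j$ is a homotopy equivalence between CW pairs, $|\k(X\smallsetminus\{x\})|$ is a strong deformation retract of $|\k(X)|$, and the point-set complement $|\k(X)|\smallsetminus|\k(X\smallsetminus\{x\})|$ is exactly the open star of the vertex $x$, which is contractible (star-shaped from $x$); the vanishing $\tau\bigl(|\k(X)|,|\k(X\smallsetminus\{x\})|\bigr)=0$ then follows directly from \cite[(20.1)]{Coh}, the result that a deformation retract with contractible open complement has trivial torsion. You never use the open star or its contractibility; instead you excise to the closed star $\k(C_x)$, use its simple connectivity to lift all relative cells coherently into one sheet of the universal cover, identify the relative $\mathbb{Z}[\pi]$-complex with $\mathbb{Z}[\pi]\otimes_{\mathbb{Z}}C_*\bigl(|\k(C_x)|,|\k(\hat{C}_x)|;\mathbb{Z}\bigr)$, and kill the torsion by base change from $\mathrm{Wh}(1)=0$, with acyclicity supplied by Proposition \ref{homo}. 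In effect you reprove, in the special case at hand, the result the paper cites. What the paper's argument buys is brevity and the clean point-set input (contractibility of the open star, which holds for any vertex); what yours buys is self-containedness and transparency about the mechanism --- it makes explicit that only acyclicity of the link plus simple connectivity of the star is needed, which is exactly why the theorem covers maximal and minimal points, where $\hat{C}_x$ need not be homotopically trivial (the non-Hausdorff suspension example). Both arguments are sound, and both correctly conclude $X\smallsetminus\{x\}\se X$ from the equivalence of simple homotopy types of the order complexes via the theorem quoted from \cite{Bar2}.
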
 
\begin{proof}
Since $|\k(X\smallsetminus \{x\})|$ is a strong deformation retract of $|\k (X)|$ and the open star of $x$, $$\overset{\circ}{st}(x)=|\k(X)|\smallsetminus |\k (X\smallsetminus \{x\})|$$ is contractible, then by \cite[(20.1)]{Coh}, the Whitehead Torsion $\tau (|\k (X)|, |\k (X\smallsetminus \{x\})|)=0$.
\end{proof}

This result essentially shows that one-point reductions are not sufficient to describe all weak homotopy types of finite spaces. Of course they are sufficient to reach all finite models of spaces with trivial Whitehead group. On the other hand, note that the fact that $X\smallsetminus \{x\}$ and $X$ have the same weak homotopy type does not imply that the inclusion $X\smallsetminus \{x\} \hookrightarrow X$ is a weak homotopy equivalence.

\begin{defi}
If $x\in X$ is a $\gamma$-point, we say that there is an \textit{elementary $\gamma$-collapse} from $X$ to $X\smallsetminus \{x\}$. A finite $T_0$-space $X$ \textit{$\gamma$-collapses} to $Y$ if there is a sequence of elementary $\gamma$-collapses that starts in $X$ and ends in $Y$. We denote this by $X\gammac Y$. If $X$ $\gamma$-collapses to a point, we say that it is $\gamma$-collapsible.
\end{defi}

In contrast to collapses, a $\gamma$-collapse does not induce in general a collapse between the associated simplicial complexes. For example, if $K$ is any triangulation of the Dunce hat, $\mathbb{C}(\x (K))\gammac$ $\x (K)$, but $aK' \ / \! \! \! \! \! \! \searrow K'$ since $K'$ is not collapsible (see Lemma \ref{mig}).

We finish this section analyzing the relationship between $\gamma$-collapsibility and joins.

\begin{prop} \label{join}
Let $X$ and $Y$ be finite $T_0$-spaces. Then
\begin{enumerate}
\item[(\emph{i})] $X \oplus Y$ is homotopically trivial if $X$ or $Y$ is homotopically trivial.
\item[(\emph{ii})] $X \oplus Y$ is $\gamma$-collapsible if $X$ or $Y$ is $\gamma$-collapsible.
\end{enumerate}
\end{prop}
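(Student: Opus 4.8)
The plan is to prove (i) first, by passing to geometric realizations, and then to deduce (ii) by imitating the proof of Proposition \ref{joincol}, invoking (i) at the decisive step.

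\emph{Part (i).} Since the topological join is commutative up to homeomorphism, it suffices to treat the case in which $X$ is homotopically trivial. First I would note that the weak homotopy equivalence $|\k(X)|\to X$ forces $|\k(X)|$ to have all homotopy groups trivial; as $|\k(X)|$ is a CW-complex, Whitehead's theorem upgrades this to genuine contractibility. Next I would use the identity $\k(X\oplus Y)=\k(X)*\k(Y)$ together with the homeomorphism $|K*L|\cong |K|*|L|$ to write $|\k(X\oplus Y)|\cong |\k(X)|*|\k(Y)|$. Because the join of a contractible CW-complex with any CW-complex is contractible (viewing $A*B$ as the homotopy pushout of $A\leftarrow A\times B\to B$, when $A\simeq\{pt\}$ the leg $A\times B\to B$ is an equivalence, so the pushout is homotopy equivalent to $A\simeq\{pt\}$; equivalently $|\k(X)|*|\k(Y)|\simeq \{pt\}*|\k(Y)|$, a cone), the realization $|\k(X\oplus Y)|$ is contractible. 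Finally the weak homotopy equivalence $|\k(X\oplus Y)|\to X\oplus Y$ shows $X\oplus Y$ is homotopically trivial.

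\emph{Part (ii).} Assume $X$ is $\gamma$-collapsible and fix a sequence $X=X_n\supsetneq \ldots \supsetneq X_1=\{x_1\}$ with $X_i=\{x_1,\ldots,x_i\}$ and $x_i$ a $\gamma$-point of $X_i$ for every $i\ge 2$. The crucial claim is that each $x_i$ remains a $\gamma$-point of $X_i\oplus Y$. This rests on the same order-theoretic identity used in Proposition \ref{joincol}, namely $\hat{C}_{x_i}^{X_i\oplus Y}=\hat{C}_{x_i}^{X_i}\oplus Y$ (adjoining $Y$ above $X_i$ leaves $\hat{U}_{x_i}$ unchanged and replaces $\hat{F}_{x_i}$ by $\hat{F}_{x_i}\oplus Y$). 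Since $\hat{C}_{x_i}^{X_i}$ is homotopically trivial, part (i) guarantees that $\hat{C}_{x_i}^{X_i}\oplus Y$ is homotopically trivial, i.e. $x_i$ is a $\gamma$-point of $X_i\oplus Y$. Removing $x_n,\ldots,x_2$ in turn therefore yields $X\oplus Y\gammac \{x_1\}\oplus Y$. The space $\{x_1\}\oplus Y$ has a minimum, hence is contractible, hence collapsible and a fortiori $\gamma$-collapsible; concatenating the two $\gamma$-collapses shows $X\oplus Y$ is $\gamma$-collapsible.

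\emph{Main obstacle.} The only non-formal ingredient is part (i), and inside it the passage from a homotopically trivial \emph{finite} space to a contractible \emph{realization}: a homotopically trivial finite space need not be contractible as a finite space, so one genuinely must leave the category of finite spaces, realize, and apply Whitehead's theorem before using that joins with contractible CW-complexes are contractible. Once (i) is in place, part (ii) is a routine transcription of Proposition \ref{joincol}, with the only point requiring care being the verification that $\hat{C}_{x_i}$ behaves correctly under $\oplus$ so that (i) can be applied at each stage.
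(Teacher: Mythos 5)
Your proof is correct and follows essentially the same route as the paper: part (i) by realizing, upgrading weak contractibility of $|\k(X)|$ to contractibility via Whitehead's theorem, and using $|\k(X\oplus Y)|\cong|\k(X)|*|\k(Y)|$; part (ii) by rerunning the argument of Proposition \ref{joincontr} with $\gamma$-points, using the identity $\hat{C}_{x_i}^{X_i\oplus Y}=\hat{C}_{x_i}^{X_i}\oplus Y$ and invoking (i) at the key step. Your write-up is in fact more detailed than the paper's (which leaves the Whitehead step and the final $\{x_1\}\oplus Y$ step implicit); the only point you share with the paper's terseness is treating explicitly just the case where $X$ (rather than $Y$) is $\gamma$-collapsible, the other case being analogous via $\hat{C}_{y_i}^{X\oplus Y_i}=X\oplus\hat{C}_{y_i}^{Y_i}$.
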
 
\begin{proof}
If $X$ or $Y$ is homotopically trivial, $|\k (X)|$ or $|\k (Y)|$ is contractible and then so is $|\k (X)|*|\k (Y)|=|\k (X\oplus Y)|$. Therefore $X\oplus Y$ is homotopically trivial.

The proof of $(ii)$ follows as in Proposition \ref{joincontr}. If $x_i\in X_i$ is a $\gamma$-point, $\hat{C}_{x_i}^{X_i\oplus Y}=\hat{C}_{x_i}^{X_i} \oplus Y$ is homotopically trivial by item $(i)$ and then $x_i$ is a $\gamma$-point of $X_i\oplus Y$.
\end{proof}

There is an analogous result for acyclic spaces that follows from the K\"unneth formula for joins \cite{Mil2}. 

Note that the converse of these results are false. To see this, consider two finite simply connected simplicial complexes $K, \ L$ such that $H_2(|K|)=\mathbb{Z}_2$, $H_2(|L|)=\mathbb{Z}_3$ and $H_n(|K|)=H_n(|L|)=0$ for every $n\ge 3$. Then $\x (K)$ and $\x (L)$ are not acyclic, but $\x (K) \oplus \x (L)$, which is weak homotopy equivalent to $|K|*|L|$, is acyclic by the K\"unneth formula and, since it is simply connected (see \cite{Mil2} or Remark \ref{libre}), it is homotopically trivial.

A counterexample for the converse of item $(ii)$ is the following.

\begin{ej} \label{ejemplo1}
Let $K$ be a triangulation of the Dunce hat. Then, $\x (K)$ is a homotopically trivial finite space of height 3. The non-Hausdorff suspension $\S (\x (K))=\x (K) \cup \{-1,1\}$ is $\gamma$-collapsible since $1$ is a $\gamma$-point and $\S (\x (K)) \smallsetminus \{1\}$ has maximum. However $\x (K)$ is not collapsible and then $\S (\x (K))$ is not collapsible by Proposition \ref{joincol}. Moreover $\x (K)$ and $S^0$ are not $\gamma$-collapsible either because their heights are less than or equal to 3.
\end{ej}


\section{h-regular complexes}

Recall that a CW-complex $K$ is regular if for each (open) cell $e^n$, the characteristic map $D^n\to \overline{e^n}$ is a homeomorphism, or equivalently, the attaching map $S^{n-1}\to K$ is a homeomorphism onto its image $\dot{e}^n$, the boundary of $e^n$. In this case, it can be proved that the closure $\overline{e^n}$ of each cell is a subcomplex, which is equivalent to say that $\dot{e}^n$ is a subcomplex.

A cell $e$ of a regular complex $K$ is a face of a cell $e'$ if $e\subseteq \overline{e'}$. This will be denoted by $e\le e'$.
The barycentric subdivision $K'$ is the simplicial complex whose vertices are the cells of $K$ and whose simplices are the sets $\{e_1,e_2, \ldots ,e_n\}$ such that $e_i$ is a face of $e_{i+1}$.

We can define, as in the case of simplicial complexes, the face poset $\x (K)$ of a regular complex $K$, which is the set of cells ordered by $\le$. Note that $\k (\x (K))=K'$, which is homeomorphic to $K$ and therefore $\x (K)$ is a finite model of $K$, i.e. it has the same weak homotopy type as $K$.

\begin{ej}
The following figure (Fig. 3) shows a regular structure for the real projective plane $\mathbb{R}P^2$. The edges are identified in the way indicated by the arrows. It has three 0-cells, six 1-cells and four 3-cells. Therefore its face poset has 13 points (Fig. 4). 

\begin{minipage}{6.5cm}
\qquad
\qquad
\includegraphics[scale=0.4]{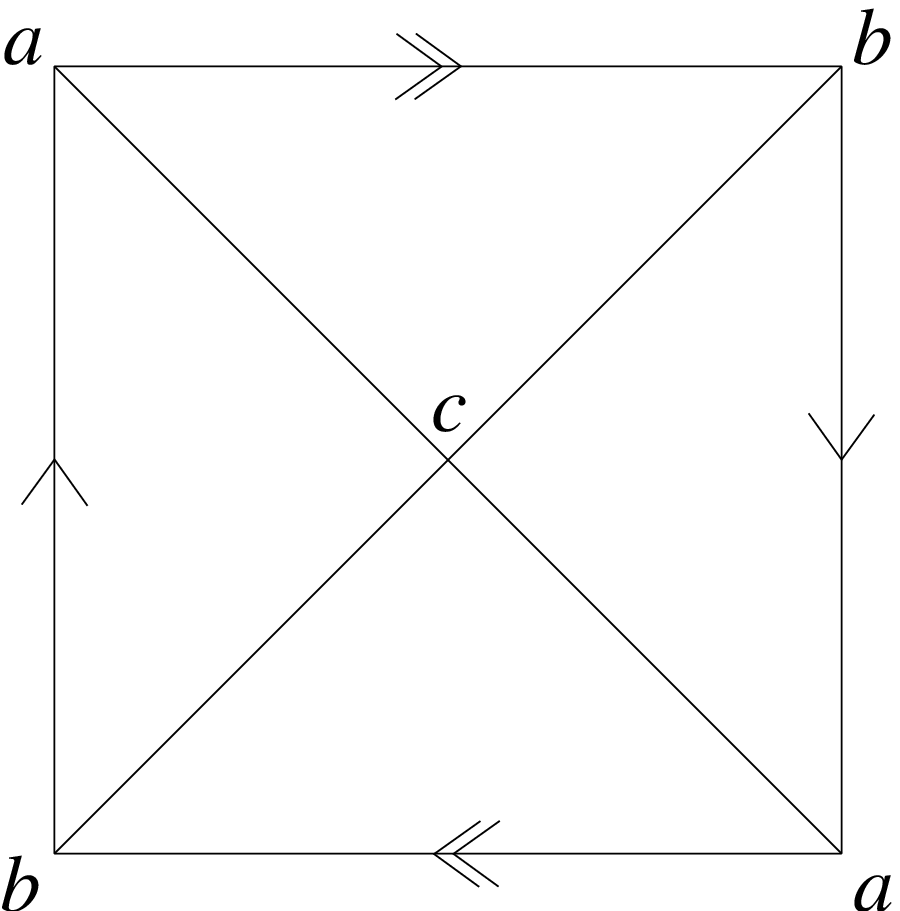}
\center{Fig. 3}
\end{minipage}
\begin{minipage}{7cm}
\qquad
\qquad
\smallskip
\medskip
\medskip
\begin{displaymath}
\xymatrix@C=14pt{ \bullet \ar@{-}[d] \ar@{-}[drr] \ar@{-}[drrrrr] & & \bullet \ar@{-}[ld] \ar@{-}[d] \ar@{-}[rrrrd] & & \bullet \ar@{-}[llld] \ar@{-}[d] \ar@{-}[dr] & & \bullet \ar@{-}[dllllll] \ar@{-}[dll] \ar@{-}[d] \\
								\bullet \ar@{-}[d] \ar@{-}[drrr] & \bullet \ar@{-}[ld] \ar@{-}[rrd] & \bullet \ar@{-}[lld] \ar@{-}[drrrr] & & \bullet \ar@{-}[dllll] \ar@{-}[drr] & \bullet \ar@{-}[dll] \ar@{-}[dr] & \bullet \ar@{-}[dlll] \ar@{-}[d] \\
		_a \bullet \ & & & _b \bullet \ & & & \ \bullet _c} 
\end{displaymath}
\center{\raisebox{-3ex}{Fig. 4}}
\end{minipage}
\end{ej}

In this section we introduce the concept of  \textit{h-regular complex}, generalizing the notion of regular complex. Given an h-regular complex $K$, one can define $\x(K)$ as before. In general, $K$ and $\k(\x(K))$ are not homeomorphic. However we prove that $\x (K)$ is a finite model of $K$. 
We also study the relationship between collapses of h-regular complexes and of finite spaces.

\begin{defi}
A CW-complex $K$ is \textit{h-regular} if the attaching map of each cell is a homotopy equivalence with its image and the closed cells $\overline{e^n}$ are subcomplexes of $K$.
\end{defi}

In particular, regular complexes are h-regular.

\begin{prop}
Let $K=L\cup e^n$ be a CW-complex such that $\dot{e}^n$ is a subcomplex of $L$. Then $\overline{e^n}$ is contractible if and only if the attaching map $\varphi :S^{n-1} \to \dot{e}^n$ of the cell $e^n$ is a homotopy equivalence.
\end{prop}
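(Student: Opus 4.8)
The plan is to identify the closed cell with a mapping cone and then treat the two implications separately. Writing $\varphi\colon S^{n-1}\to \dot e^n$ for the corestricted attaching map, the characteristic map presents a pushout $\overline{e^n}=\dot e^n\cup_\varphi D^n$, and since $D^n$ is the cone on $S^{n-1}$ this is exactly the mapping cone $C_\varphi$ of $\varphi$. Both $S^{n-1}$ and $\dot e^n$ are CW-complexes (the latter because it is a subcomplex of $L$), and $\varphi$ is surjective onto its image $\dot e^n$, so $\dot e^n$ is connected whenever $n\ge 2$. These observations are what every later step will lean on.

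For the implication that $\varphi$ a homotopy equivalence forces $\overline{e^n}$ contractible, I would invoke the gluing (cube) lemma. Compare the two pushout spans
\[
D^n\hookleftarrow S^{n-1}\xrightarrow{\ \mathrm{id}\ } S^{n-1},\qquad
D^n\hookleftarrow S^{n-1}\xrightarrow{\ \varphi\ }\dot e^n,
\]
whose pushouts are $D^n$ and $\overline{e^n}$ respectively. The leg $S^{n-1}\hookrightarrow D^n$ is a cofibration in both spans, and the three vertical comparison maps $\mathrm{id}_{D^n}$, $\mathrm{id}_{S^{n-1}}$ and $\varphi$ are homotopy equivalences; hence the induced map $D^n\to\overline{e^n}$ on pushouts is a homotopy equivalence, and $\overline{e^n}$ is contractible.

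For the converse I would start from homology. The cofibre sequence $S^{n-1}\xrightarrow{\varphi}\dot e^n\to C_\varphi=\overline{e^n}$ yields a long exact sequence in reduced homology, and contractibility of $\overline{e^n}$ forces $\varphi_*\colon\widetilde H_*(S^{n-1})\to\widetilde H_*(\dot e^n)$ to be an isomorphism; thus $\dot e^n$ has the homology of $S^{n-1}$. It then remains to upgrade this homology isomorphism to a homotopy equivalence, and here I would split according to $n$, using that the source is a sphere and that $\dot e^n$ is low-dimensional. For $n=1$ the statement is elementary: $\dot e^1$ is $0$-dimensional, so $\varphi_*$ being a homology isomorphism means $\dot e^1$ has two points and $\varphi$ is a bijection $S^0\to S^0$. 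For $n=2$, $\dot e^2$ is the image of $S^1$, hence a connected graph with $H_1=\mathbb Z$, so it is homotopy equivalent to $S^1$; since $\varphi_*$ is an isomorphism on $H_1=\pi_1^{\mathrm{ab}}$ it induces an isomorphism on the abelian groups $\pi_1$, and a $\pi_1$-isomorphism between spaces of the homotopy type of $S^1$ is a homotopy equivalence. For $n\ge 3$ the sphere $S^{n-1}$ is simply connected, and van Kampen applied to $\overline{e^n}=\dot e^n\cup_\varphi D^n$ (attaching a cell of dimension $\ge 3$) gives $\pi_1(\dot e^n)=\pi_1(\overline{e^n})=1$; both spaces then being simply connected CW-complexes with $\varphi_*$ a homology isomorphism, the homology Whitehead theorem shows $\varphi$ is a homotopy equivalence.

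The main obstacle is precisely this last upgrade in the converse direction: a homology isomorphism need not be a homotopy equivalence, and the analogous assertion with an arbitrary source (for instance an acyclic but non-simply-connected domain, mapped into its cone) genuinely fails. The argument therefore must control $\pi_1(\dot e^n)$, and the point is that this is forced by the geometry at hand — the dimension restriction makes $\dot e^2$ a graph, while van Kampen trivializes $\pi_1(\dot e^n)$ for $n\ge 3$. Once $\dot e^n$ is known to be simply connected (or of the homotopy type of $S^1$), Whitehead's theorem closes the gap and completes both directions.
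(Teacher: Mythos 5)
Your proposal is correct and follows essentially the same route as the paper: the forward implication via the cofibration/gluing argument (the paper phrases it through the homotopy extension property of $S^{n-1}\hookrightarrow D^n$), and the converse by first extracting a homology isomorphism from the contractibility of $\overline{e^n}$ (the paper via the homeomorphism $D^n/S^{n-1}\to\overline{e^n}/\dot{e}^n$, you via the cofibre sequence) and then splitting into the cases $n\ge 3$ (simple connectivity plus Whitehead), $n=2$ (graph argument), and $n=1$ (direct). The small variations, such as deducing bijectivity of $\varphi$ for $n=1$ from $H_0$ rather than from injectivity, are immaterial.
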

\begin{proof}
Suppose $\varphi : S^{n-1} \to \dot{e}^n$ is a homotopy equivalence. Since $S^{n-1} \hookrightarrow D^n$ has the homotopy extension property, the characteristic map $\psi :D^n \to \overline{e^n}$ is also a homotopy equivalence.

Suppose now that $\overline{e^n}$ is contractible. 
The map $\overline{\psi}:D^n / S^{n-1}\to \overline{e^n} / \dot{e}^n$ is a homeomorphism and therefore it induces isomorphisms in homology and, since $\overline{e^n}$ is contractible, by the long exact sequence of homology it follows that $\varphi _* :H_k(S^{n-1})\to H_k(\dot{e}^n)$ is an isomorphism for every $k$.

If $n\ge 3$, $\pi_1 (\dot{e}^n)=\pi_1 (\overline{e^n})=0$ and by a theorem of Whitehead, $\varphi$ is a homotopy equivalence.
If $n=2$, $\dot{e}^n$ is just a graph and since $\varphi _* :H_1(S^1)\to H_1(\dot{e}^n)$ is an isomorphism, the attaching map $\varphi$ is a homotopy equivalence.
Finally, if $n=1$, since the cell is contractible, $\varphi$ is one-to-one and therefore a homeomorphism.
\end{proof}

\begin{coro}
A CW-complex is h-regular if and only if the closed cells are contractible subcomplexes. 
\end{coro}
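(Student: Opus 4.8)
The plan is to derive this as a cell-by-cell application of the preceding Proposition. The key observation is that \emph{both} conditions in the statement already contain the requirement that every closed cell $\overline{e^n}$ be a subcomplex: this is explicit in the definition of h-regularity, and it is part of the phrase ``contractible subcomplexes.'' So the subcomplex hypothesis is common to both sides, and the real content is the equivalence, for a single fixed cell, between ``the attaching map is a homotopy equivalence onto its image'' and ``$\overline{e^n}$ is contractible.'' I would therefore fix a cell $e^n$ and reduce to the exact situation covered by the Proposition.

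For the reduction, I would argue that whenever $\overline{e^n}$ is a subcomplex, its boundary $\dot{e}^n=\overline{e^n}\smallsetminus e^n$ is the union of all cells of $\overline{e^n}$ other than $e^n$ itself, hence again a subcomplex; moreover $\dot{e}^n$ is exactly the image of the attaching map $\varphi\colon S^{n-1}\to K$, since the characteristic map carries the open disk homeomorphically onto $e^n$ and all of $D^n$ onto $\overline{e^n}$. Thus $\overline{e^n}=\dot{e}^n\cup e^n$ satisfies the hypothesis of the Proposition with $L=\dot{e}^n$, and the Proposition applies verbatim: $\overline{e^n}$ is contractible if and only if $\varphi\colon S^{n-1}\to\dot{e}^n$ is a homotopy equivalence.

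With this in hand, both implications are immediate. If $K$ is h-regular, each $\overline{e^n}$ is a subcomplex and each attaching map is a homotopy equivalence onto $\dot{e}^n$, so by the Proposition each closed cell is contractible; thus the closed cells are contractible subcomplexes. Conversely, if every closed cell is a contractible subcomplex, the subcomplex half of the definition holds outright, and the Proposition turns contractibility of each $\overline{e^n}$ into the statement that each attaching map is a homotopy equivalence onto its image $\dot{e}^n$; hence $K$ is h-regular.

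There is no serious obstacle here, since the statement is genuinely a corollary. The only points demanding care are the two bookkeeping facts needed to invoke the Proposition: that $\dot{e}^n$ is a subcomplex as soon as $\overline{e^n}$ is, and that $\dot{e}^n$ coincides with the image of the attaching map, so that ``homotopy equivalence with its image'' and ``homotopy equivalence $S^{n-1}\to\dot{e}^n$'' express the same condition. Both are standard consequences of the description of the boundary of a cell in a complex all of whose closed cells are subcomplexes, so I expect the write-up to be short.
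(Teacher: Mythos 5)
Your proof is correct and is exactly the argument the paper intends: the corollary is stated there without proof as an immediate cell-by-cell application of the preceding Proposition, taking $L=\dot{e}^n$ and $\overline{e^n}=\dot{e}^n\cup e^n$. Your added bookkeeping (that $\dot{e}^n$ is a subcomplex once $\overline{e^n}$ is, and that $\dot{e}^n$ is the image of the attaching map) is accurate and just makes explicit what the paper leaves tacit.
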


\begin{ej} \label{dh}
The following are four different h-regular structures for the Dunce hat which are not regular structures. In each example the edges are identified in the way indicated by the arrows.

\begin{minipage}{8cm}
\qquad
\includegraphics[scale=0.4]{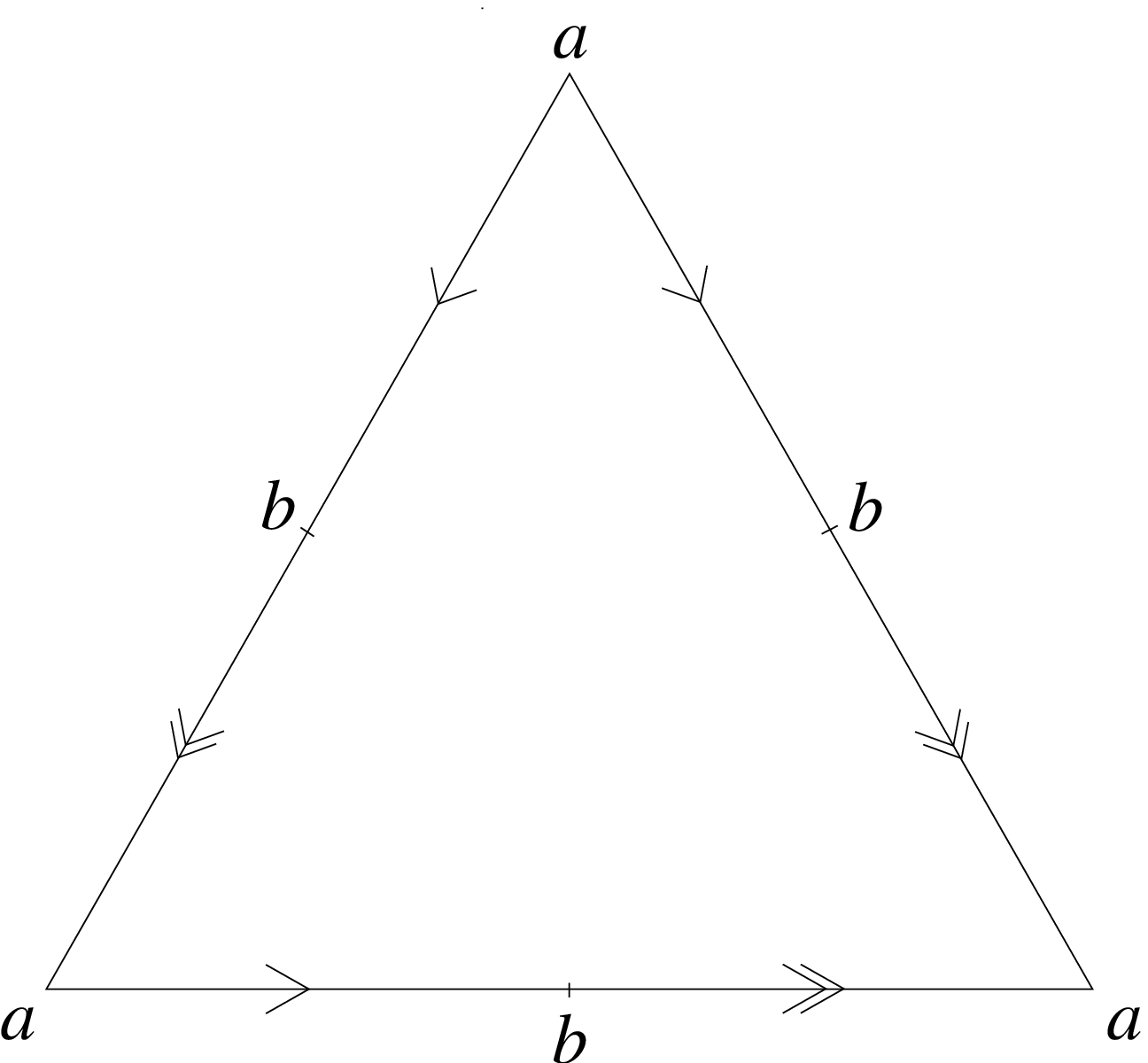}
\end{minipage}
\begin{minipage}{6cm}
\includegraphics[scale=0.4]{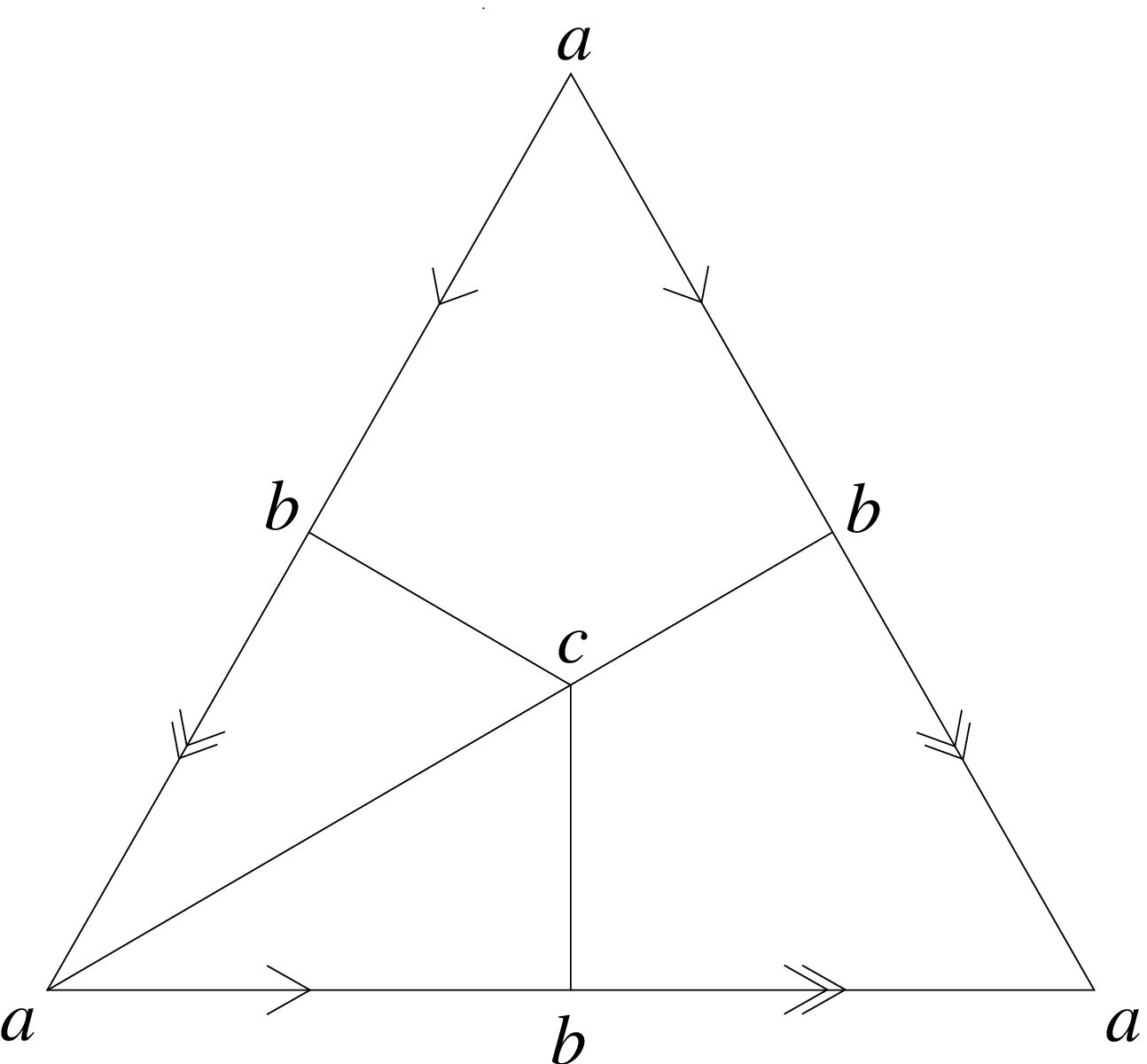}
\end{minipage}

\begin{minipage}{8cm}
\qquad
\includegraphics[scale=0.4]{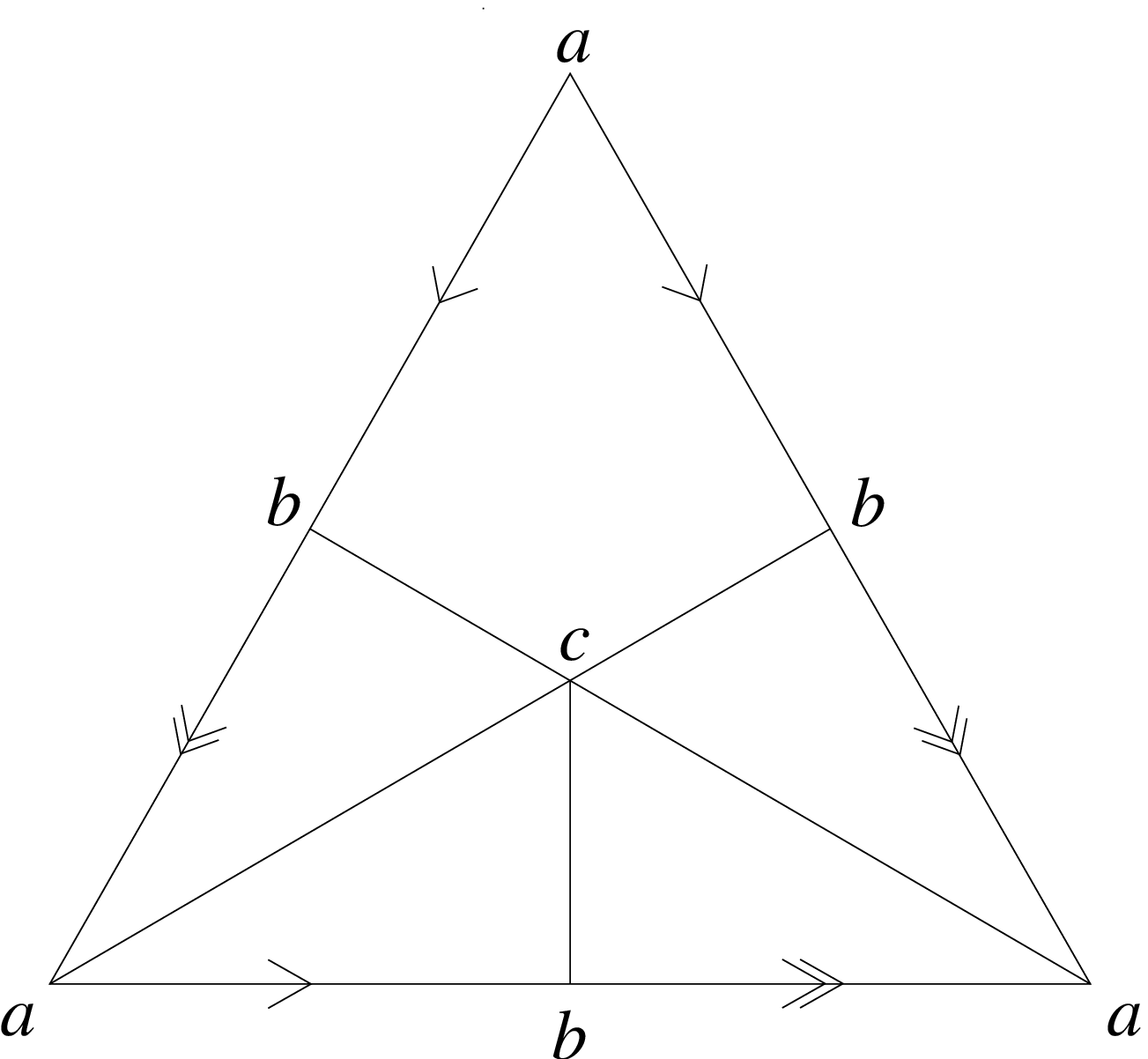}
\end{minipage}
\begin{minipage}{6cm}
\includegraphics[scale=0.4]{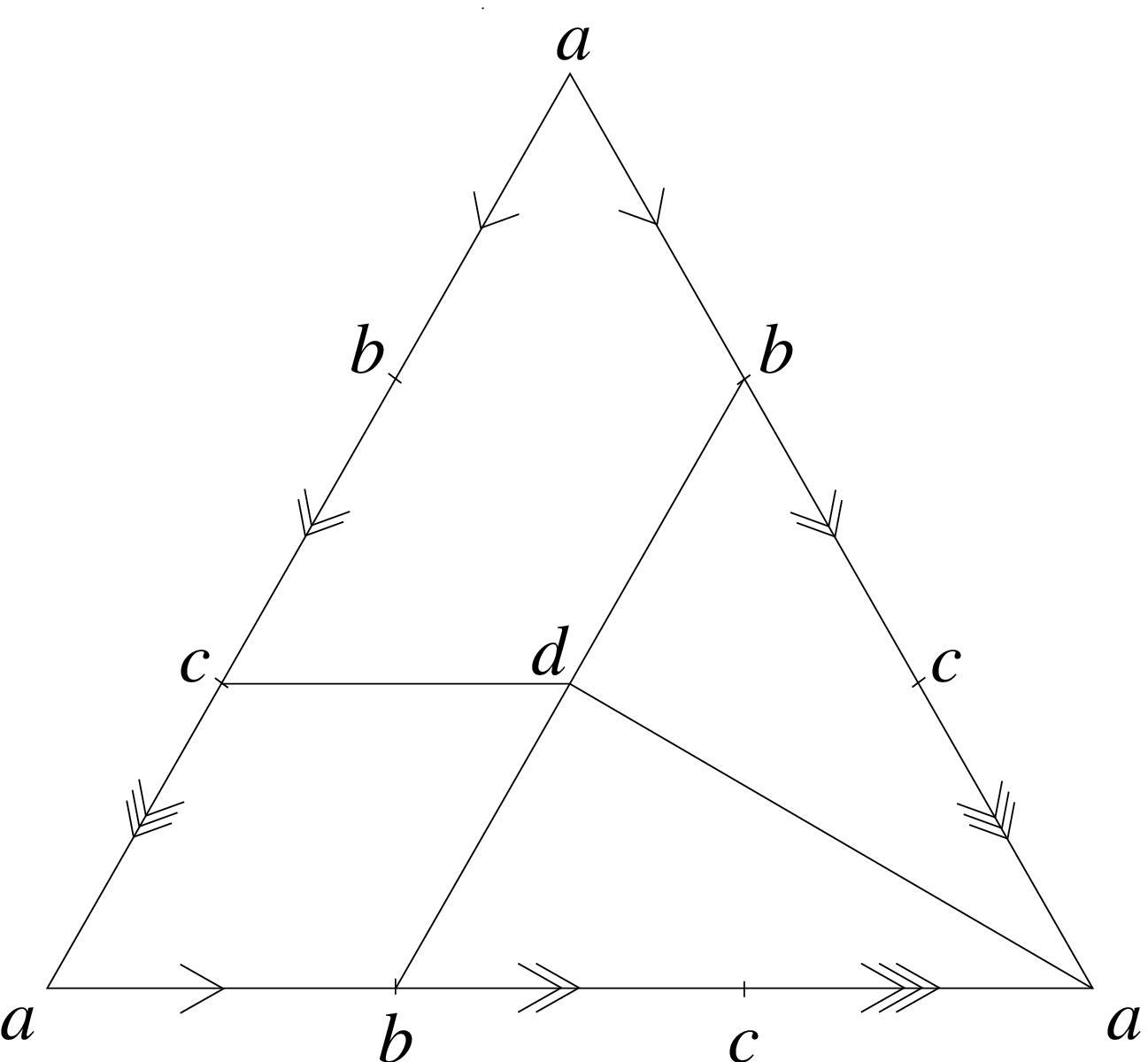}
\end{minipage}
\end{ej}

For an h-regular complex $K$, we also define the \textit{associated finite space} (or \textit{face poset}) $\x (K)$  as the poset of cells of $K$ ordered by the face relation $\le$, like in the regular case.

The proof of the following lemma is standard.

\begin{lema}
Let $K\cup e$ be a CW-complex, let $\psi :D^n\to \overline{e}$ be the characteristic map of the cell $e$ and let $A$ be a subspace of $\dot{e}$. We denote $C_e(A)=\{ \psi (x) \ | \ x\in D^n\smallsetminus\{0\}, \ \psi (\frac{x}{\parallel x \parallel}) \in A \} \subseteq \overline{e}$. Then
\begin{enumerate}
\item If $A\subseteq \dot{e}$ is open, $C_e(A)\subseteq \overline{e}$ is open.
\item $A\subseteq C_e(A)$ is a strong deformation retract.
\end{enumerate}
\end{lema}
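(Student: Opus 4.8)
The plan is to exploit the fact that the characteristic map $\psi\colon D^n\to\overline{e}$ is a quotient map: since $D^n$ is compact and $\overline{e}$, as a subspace of a CW-complex, is Hausdorff, $\psi$ is a closed continuous surjection and hence a quotient map. Writing $\varphi=\psi|_{S^{n-1}}$ for the attaching map and $\rho\colon D^n\smallsetminus\{0\}\to S^{n-1}$, $\rho(x)=x/\|x\|$, for the radial projection, the first thing I would do is identify the saturation $\psi^{-1}(C_e(A))$ explicitly. Since $\psi$ maps the open disk homeomorphically onto the open cell $e$ (which is disjoint from $\dot{e}$), every $\psi$-preimage of a point of $\dot{e}$ lies on $S^{n-1}$; splitting into the cases $\psi(x)\in e$ (where injectivity on the interior applies) and $\psi(x)\in\dot{e}$ (where everything lives on the sphere), one checks that $\psi^{-1}(C_e(A))=\{x\in D^n\smallsetminus\{0\}\ :\ \varphi(\rho(x))\in A\}=(\varphi\circ\rho)^{-1}(A)$.

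For part (1) this description settles the matter at once: $\varphi\circ\rho\colon D^n\smallsetminus\{0\}\to\dot{e}$ is continuous, so if $A$ is open in $\dot{e}$ then $(\varphi\circ\rho)^{-1}(A)$ is open in $D^n\smallsetminus\{0\}$, hence open in $D^n$. As $\psi$ is a quotient map and $\psi^{-1}(C_e(A))$ is open and saturated, $C_e(A)$ is open in $\overline{e}$.

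For part (2) I would define the deformation upstairs and push it down. On $D^n\smallsetminus\{0\}$ take the radial expansion $\tilde H(x,t)=\rho(x)\big((1-t)\|x\|+t\big)$, which keeps the direction $\rho(x)$ fixed, satisfies $\tilde H(x,0)=x$ and $\tilde H(x,1)=\rho(x)\in S^{n-1}$, and is stationary on the sphere (when $\|x\|=1$ one gets $\tilde H(x,t)=x$ for all $t$). Because $(1-t)\|x\|+t>0$ throughout, $\tilde H$ preserves the saturation $\psi^{-1}(C_e(A))$, and because it is stationary on $S^{n-1}$ it respects the identifications made by $\psi$; hence $H(\psi(x),t)=\psi(\tilde H(x,t))$ is well defined. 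For continuity I would use that $I$ is locally compact Hausdorff, so $\psi\times\mathrm{id}_I$ is a quotient map, and its restriction to the open saturated set $\psi^{-1}(C_e(A))\times I$ is again a quotient map onto $C_e(A)\times I$; since $H\circ(\psi\times\mathrm{id}_I)=\psi\circ\tilde H$ is continuous, so is $H$. Finally I would check the retraction conditions: $H(\cdot,0)=\mathrm{id}$; $H(\psi(x),1)=\psi(\rho(x))=\varphi(\rho(x))\in A$; and $H$ is stationary on $A$, since any $a\in A\subseteq\dot{e}$ equals $\psi(x)$ only for $x\in S^{n-1}$, where $\tilde H$ is constant in $t$.

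The main obstacle — indeed the only non-formal point — is the descent in part (2): verifying that the radial homotopy is compatible with the boundary identifications of $\psi$ and that the induced map on $C_e(A)$ is continuous. This is exactly where the quotient-map machinery earns its keep, namely the closedness of $\psi$, the stationarity of $\tilde H$ on $S^{n-1}$, and the stability of quotient maps under $\times\,\mathrm{id}_I$ for locally compact $I$. Everything else reduces to the explicit computation of $\psi^{-1}(C_e(A))$ and is routine.
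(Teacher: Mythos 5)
The paper itself gives no argument here (it dismisses the lemma as ``standard''), so your proposal has to stand on its own, and it almost does. The identification $\psi^{-1}(C_e(A)) = (\varphi\circ\rho)^{-1}(A)$, the proof of part (1), the well-definedness of the descended homotopy, and the verification of the retraction conditions are all correct. The one genuine gap is in the continuity argument for part (2): you invoke the fact that the restriction of the quotient map $\psi \times \mathrm{id}_I$ to the \emph{open} saturated set $\psi^{-1}(C_e(A)) \times I$ is again a quotient map. But part (2) is asserted for an arbitrary subspace $A \subseteq \dot{e}$; the openness hypothesis belongs only to part (1). For a non-open $A$, the set $\psi^{-1}(C_e(A))$ is saturated but in general neither open nor closed in $D^n$, and the restriction of a quotient map to such a set need not be a quotient map; so, as written, your descent of continuity proves part (2) only under the extra assumption that $A$ is open. (That happens to be the only case the paper actually uses in proving that $\x (K)$ is a finite model of an h-regular $K$, but the lemma claims more.)

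The repair is short and uses exactly the machinery you set up. Run your argument once with $A = \dot{e}$: then $\psi^{-1}(C_e(\dot{e})) = D^n \smallsetminus \{0\}$ is open and saturated, so the descended radial homotopy $H$ is continuous on $C_e(\dot{e}) \times I = (\overline{e} \smallsetminus \{\psi(0)\}) \times I$. Now for an arbitrary subspace $A \subseteq \dot{e}$, your observation that $\tilde H$ preserves $(\varphi \circ \rho)^{-1}(A)$ says precisely that $H$ carries $C_e(A) \times I$ into $C_e(A)$, and stationarity on $\dot{e} \supseteq A$ and the endpoint conditions are already checked; hence the restriction of $H$ to $C_e(A) \times I$, continuous simply as a restriction of a continuous map to a subspace, is the desired strong deformation retraction of $C_e(A)$ onto $A$. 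With that one-line reordering --- descend globally first, restrict second --- your proof is complete in the stated generality.
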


\begin{teo}
If $K$ is a finite h-regular complex, $\x(K)$ is a finite model of $K$.
\end{teo}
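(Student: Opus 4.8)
The plan is to reduce the statement to McCord's theorem for finite spaces --- the weak homotopy equivalence $\mu\colon |\k(\x(K))|\to \x(K)$ recalled in the Preliminaries --- by showing that $|\k(\x(K))|$ has the homotopy type of $K$. Since both are CW-complexes, it then suffices to produce a homotopy equivalence $h\colon |\k(\x(K))|\to K$; composing $\mu$ with a homotopy inverse of $h$ gives the desired weak homotopy equivalence $K\to \x(K)$, and in particular exhibits $\x(K)$ as a finite model of $K$. I emphasize that the naive carrier map $K\to\x(K)$ fails to be continuous for the face-poset topology, and the open stars of cells are not obviously contractible for a merely h-regular complex; this is why I work with \emph{closed} cells, whose contractibility is precisely the content of h-regularity (by the Corollary characterizing h-regular complexes), exactly mirroring the role of the closed simplices in McCord's original argument.

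I would argue by induction on the number of cells, proving the stronger statement that the homotopy equivalences $h_M\colon|\k(\x(M))|\to M$ can be chosen for all subcomplexes $M\subseteq K$ compatibly with inclusions, i.e.\ $h_M$ restricted to $|\k(\x(N))|$ equals $h_N$ followed by $N\hookrightarrow M$ whenever $N\subseteq M$. The base case (a single $0$-cell) is immediate. For the inductive step I choose a cell $e$ that is maximal in the face poset (e.g.\ of top dimension); then $L:=K\smallsetminus e$ and $\dot e$ are h-regular subcomplexes with strictly fewer cells, $\dot e\subseteq L$, and $K=L\cup_{\dot e}\overline e$, where $\dot e\hookrightarrow\overline e$ is a cofibration and $\overline e$ is contractible.

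On the finite-space side, since $e$ is maximal its minimal open set is $U_e=\hat U_e\oplus\{e\}=\mathbb C(\hat U_e)$ with $\hat U_e=\x(\dot e)$, and $\x(L)\cap U_e=\x(\dot e)$. By the Remark $\k(X\oplus Y)=\k(X)*\k(Y)$ we get $\k(U_e)=\k(\x(\dot e))*\{e\}$, the simplicial cone on $\k(\x(\dot e))$; and every chain of $\x(K)$ either misses $e$ (so lies in $\x(L)$) or has $e$ as its top element (so lies in $U_e$). Hence $|\k(\x(K))|$ is the pushout of $|\k(\x(L))|\hookleftarrow|\k(\x(\dot e))|\hookrightarrow \mathrm{cone}\,|\k(\x(\dot e))|$ along cofibrations, matching the pushout description of $K$.

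To build $h_K$ I keep $h_L$ unchanged on $|\k(\x(L))|$ and extend its restriction to $|\k(\x(\dot e))|$ --- which, by the compatible inductive hypothesis, equals $h_{\dot e}$ followed by $\dot e\hookrightarrow\overline e$ --- over the cone into $\overline e$; this extension exists because $\overline e$ is contractible (so the map is null-homotopic), and the resulting map of cones into $\overline e$ is a homotopy equivalence since both are contractible. The three corner maps ($h_L$, $h_{\dot e}$, $\mathrm{cone}\to\overline e$) are homotopy equivalences and the squares commute by construction, so the gluing lemma for pushouts along cofibrations makes the induced map $h_K\colon|\k(\x(K))|\to K$ a homotopy equivalence; compatibility with all sub-subcomplexes is inherited from $h_L$, which was never altered. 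The step demanding the most care is exactly this compatibility bookkeeping, which is what permits the gluing lemma to apply at every stage; once it is in place, h-regularity enters solely through the contractibility of each closed cell $\overline e$.
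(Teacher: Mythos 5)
Your proof is correct, but it takes a genuinely different route from the paper's. The paper constructs an explicit map $f_K\colon K\to \x (K)$, defined cell by cell via the characteristic maps (the center of each cell $e$ goes to the point $e\in\x (K)$, every other point is pushed radially to the boundary and mapped inductively), and then shows by induction on the number of cells that $f_K^{-1}(U_e)$ is open and contractible for every cell $e$; the conclusion follows from McCord's open-cover criterion \cite[Theorem 6]{Mcc}, and the point-set work is carried by the lemma about the sets $C_e(A)$. You never construct a map into the finite space at all: you prove the intermediate statement that $|\k (\x (K))|$ is homotopy equivalent to $K$, by matching the pushout decomposition $K=L\cup_{\dot{e}}\overline{e}$ over a maximal cell $e$ with the simplicial decomposition $\k (\x (K))=\k (\x (L))\cup e\,\k (\x (\dot{e}))$, and invoking the gluing lemma for homotopy equivalences along cofibrations; you then compose with McCord's simplicial weak equivalence $|\k (\x (K))|\to\x (K)$. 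What the paper's route buys is an explicit, canonical weak homotopy equivalence $K\to\x (K)$ directly generalizing McCord's map for simplicial complexes, with no gluing-lemma machinery. What your route buys is the statement $|\k (\x (K))|\simeq K$ itself --- that the ``barycentric subdivision'' of an h-regular complex recovers its homotopy type, in the spirit of \cite{Bjo} --- while avoiding the point-set analysis of characteristic maps. One place where your write-up is too quick: for a subcomplex $N$ with $e\in N\subsetneq K$, compatibility is not literally ``inherited from $h_L$, which was never altered''; you must set $h_N:=h_K|_{|\k (\x (N))|}$, observe that this restriction is exactly the map glued from $h_{N\cap L}$ and the \emph{same} cone extension, and apply the gluing lemma once more (to the decomposition $N=(N\cap L)\cup_{\dot{e}}\overline{e}$) to see that it is a homotopy equivalence. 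This is routine, but it is precisely what makes your strengthened inductive statement --- compatibility over all subcomplexes, not just over the pair $\dot{e}\subseteq L$ --- close up.
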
 
\begin{proof}
We define recursivelly a weak homotopy equivalence $f_K:K\to \x (K)$.

Assume $f_{K^{n-1}}:K^{n-1}\to \x (K^{n-1})\subseteq \x(K)$ is already defined and let $x=\psi (a)$ be a point in an $n$-cell $e^n$ with characteristic map $\psi :D^n\to \overline{e^n}$. If $a=0\in D^n$, define $f_K(x)=e^n$. Otherwise, define $f_K(x)=f_{K^{n-1}}(\psi(\frac{a}{\parallel a \parallel }))$. 

In particular note that if $e^0\in K$ is a $0$-cell, $f_K(e^0)=e^0\in \x (K)$. Notice also that if $L$ is a subcomplex of $K$, $f_L=f_K|_L$.

We will show by induction on the number of cells of $K$, that for every cell $e\in K$, $f_K^{-1}(U_e)$ is open and contractible. This will prove that $f_K$ is continuous and, by McCord's Theorem 
\cite[Theorem 6]{Mcc}, a weak homotopy equivalence.

Let $e$ be a cell of $K$. Suppose first that there exists a cell of $K$ which is not contained in $\overline{e}$. Take a maximal cell $e'$ (with respect to the face relation $\le$) with this property. Then $L=K\smallsetminus e'$ is a subcomplex and by induction, $f_L^{-1}(U_e)$ is open in $L$. It follows that $f_L^{-1}(U_e)\cap \dot{e}' \subseteq \dot{e}'$ is open and by the previous lemma, $C_{e'}(f_L^{-1}(U_e)\cap \dot{e}') \subseteq \overline{e'}$ is open. Therefore, $$f_K^{-1}(U_e)=f_L^{-1}(U_e)\cup C_{e'}(f_L^{-1}(U_e)\cap \dot{e}')$$ is open in $K$.

Moreover, since $f_L^{-1}(U_e)\cap \dot{e}' \subseteq C_{e'}(f_L^{-1}(U_e)\cap \dot{e}')$ is a strong deformation retract, so is $f_L^{-1}(U_e)\subseteq f_K^{-1}(U_e)$. By induction, $f_K^{-1}(U_e)$ is contractible.

In the case that every cell of $K$ is contained in $\overline{e}$, $f_K^{-1}(U_e)=\overline{e}=K$, which is open and contractible.
\end{proof}   

As an application we deduce that the finite spaces associated to the h-regular structures of the Dunce hat considered in Example \ref{dh} are all homotopically trivial. 
The first one is a contractible space of 5 points, the second one is a collapsible and non-contractible space of 13 points and the last two are non-collapsible spaces of 15 points since they do not have weak points.
Here we exhibit the Hasse diagram of the space associated to the third h-regular structure of the Dunce hat.

\begin{displaymath}
\xymatrix@C=16pt@R=36pt{ \bullet \ar@{-}[d] \ar@{-}[dr] \ar@{-}[drrr]& \bullet \ar@{-}[d] \ar@{-}[dr] \ar@{-}[drrrr]& & \bullet \ar@{-}[dlll] \ar@{-}[dl] \ar@{-}[dr]& & \bullet \ar@{-}[dl] \ar@{-}[d] \ar@{-}[dr] & \bullet \ar@{-}[dlll] \ar@{-}[dl] \ar@{-}[d]  \\
		\bullet \ar@{-}[dr] \ar@{-}[drrr] & \bullet \ar@{-}[drr] \ar@{-}[drrrr] & \bullet \ar@{-}[dl] \ar@{-}[drrr] & \bullet \ar@{-}[dll] \ar@{-}[drr] & \bullet \ar@{-}[dlll] \ar@{-}[dr] & \bullet \ar@{-}[dllll] \ar@{-}[dll] & \bullet \ar@{-}[dlll] \ar@{-}[dl] \\
		& _b \bullet \ & & \ \bullet _a & & \ \bullet _c & } 
\end{displaymath}
\begin{center}
Fig. 5: A homotopically trivial non-collapsible space of 15 points.
\end{center}

\begin{ej}
Let $K$ be the space which is obtained from a square by identifying all its edges as indicated.
\begin{center} \includegraphics[scale=0.4]{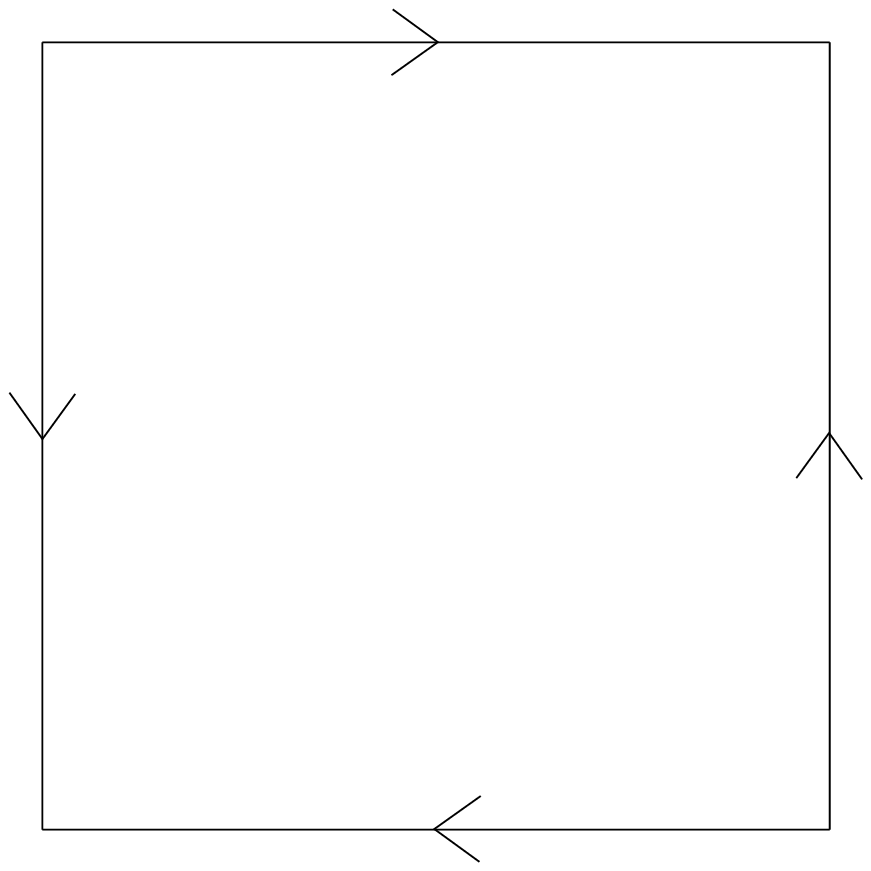} \end{center}
We verify that K is homotopy equivalent to $S^2$ using techniques of finite spaces.
Consider the following h-regular structure of $K$
\begin{center} \includegraphics[scale=0.4]{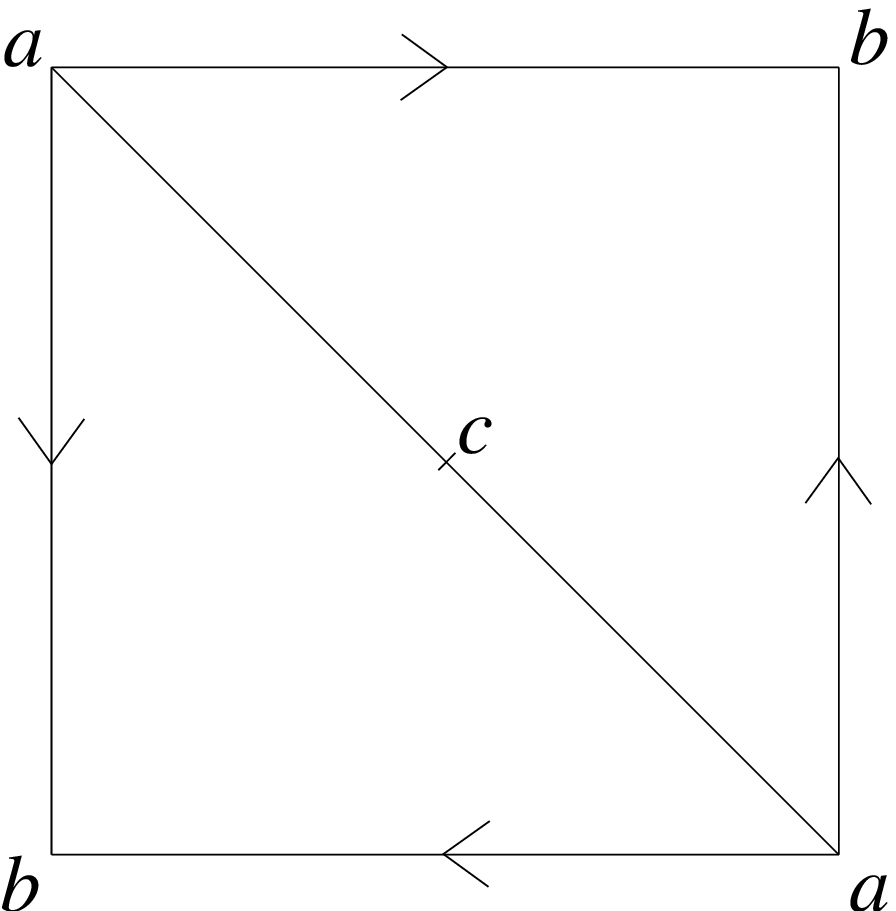} \end{center}
which consists of three 0-cells, three 1-cells and two 2-cells.
The Hasse diagram of the associated finite space $\x (K)$ is
\begin{displaymath}
\xymatrix@C=6pt{ \bullet \ar@{-}[d] \ar@{-}[drr] \ar@{-}[drrr] & & \bullet \ar@{-}[dll] \ar@{-}[d] \ar@{-}[dr] & & \\ 
		\bullet \ar@{-}[d] \ar@{-}[drr] & & \bullet \ar@{-}[dll] \ar@{-}[d] & \ \ \raisebox{0.9ex}{$\bullet ^{ab}$} \ar@{-}[dl] \ar@{-}[dr] & \\ 
		_c \bullet \ & & \ \bullet _a & & \ \bullet _b } 
\end{displaymath}
The 0-cell $b$ is an up beat point of $\x (K)$ and the 1-cell $ab$ is a down beat point of $\x (K) \smallsetminus \{b\}$. Therefore $K$ is weak homotopy equivalent to $\x (K) \smallsetminus \{b,ab\}$ which is a (minimal) finite model of $S^2$ (see \cite{Bar}). In fact $\x (K) \smallsetminus \{b,ab\}=S^0 \oplus S^0 \oplus S^0$ is weak homotopy equivalent to $S^0*S^0*S^0=S^2$.
\end{ej}

In \cite{Bar2} we proved that a collapse $K \searrow L$ of finite simplicial complexes induces a collapse $\x(K) \searrow \x(L)$ between the associated finite spaces. This is not true when $K$ and $L$ are regular complexes. Consider $L=\k (W)$ the associated simplicial complex to the Wallet $W$ (see Fig. 6 below), and $K$ the CW-complex obtained from $L$ by attaching a regular 2-cell $e^2$ with boundary $\k (\{a,b,c,d\})$ and a regular 3-cell $e^3$ with boundary $L\cup e^2$.

\begin{displaymath}
\xymatrix@C=6pt{ \bullet \ar@{-}[d] \ar@{-}[drr] & & ^a \bullet \ \ar@{-}[lld] \ar@{-}[rrd] & & \ \ \bullet ^b \ar@{-}[lld] \ar@{-}[rrd] & & \bullet \ar@{-}[lld] \ar@{-}[d]  \\
		\bullet \ar@{-}[dr] \ar@{-}[drrr] & & \bullet \ar@{-}[dl] \ar@{-}[dr] & & \bullet \ar@{-}[dl] \ar@{-}[dr] & & \bullet \ar@{-}[dlll] \ar@{-}[dl] \\
		& _c \bullet \ & & \bullet & & \ \bullet _d } 
\end{displaymath}
\begin{center}
Fig. 6: $W$
\end{center}
Note that the complex $K$ is regular and collapses to $L$, but $\x(K)=\x(L) \cup \{e^2,e^3\}$ does not collapse to $\x(L)$ because $\hat{U}_{e^3}^{\x(K)\smallsetminus \{e^2\}}=\x (L)=W'$ is not contractible.
However, one can prove that a collapse $K\searrow L$ between h-regular CW-complexes induces a $\gamma$-collapse $\x (K) \gammac \x (L)$.

\begin{teo} \label{coll}
Let $L$ be a subcomplex of an h-regular complex $K$. If $K\searrow L$, then $\x(K) \gammac \x(L)$.
\end{teo}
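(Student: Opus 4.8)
The plan is to reduce immediately to a single elementary collapse and then perform two one-point reductions in a forced order. Since any subcomplex of an h-regular complex is again h-regular (its closed cells remain contractible subcomplexes), a general collapse $K\searrow L$ factors as a chain of elementary collapses through h-regular subcomplexes, and $\gamma$-collapses compose; so it suffices to treat the case $L=K\smallsetminus\{e^{n-1},e^n\}$ where $e^{n-1}$ is a free face of the maximal cell $e^n$.

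First I would dispose of $e^{n-1}$. Being a free face, its only proper coface is $e^n$, so in $\x(K)$ the set $\hat{F}_{e^{n-1}}=\{e^n\}$ has a minimum; thus $e^{n-1}$ is an up beat point, hence a weak point, hence a $\gamma$-point. Removing it is an elementary $\gamma$-collapse $\x(K)\gammac X'$, where $X'=\x(K)\smallsetminus\{e^{n-1}\}$.

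The substance is to show that $e^n$ is a $\gamma$-point of $X'$, after which removing it yields $X'\gammac \x(L)$ and we are done. Since $e^n$ is maximal, $\hat{C}_{e^n}^{X'}=\hat{U}_{e^n}^{X'}$, and this subspace is exactly $\x(M)$ for $M=\dot{e}^n\smallsetminus e^{n-1}$, the subcomplex of proper faces of $e^n$ other than $e^{n-1}$ (equivalently $M=\overline{e^n}\cap L$). By the finite-model theorem for h-regular complexes proved above, $\x(M)$ is weakly homotopy equivalent to $M$, so it is homotopically trivial precisely when $M$ is contractible. To obtain contractibility of $M$ I would observe that the free pair $(e^{n-1},e^n)$ lies entirely inside the closed cell $\overline{e^n}$ and gives an elementary collapse $\overline{e^n}\searrow M$; a collapse is a homotopy equivalence, while $\overline{e^n}$ is contractible by h-regularity, so $M$ is contractible. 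Hence $\hat{C}_{e^n}^{X'}=\x(M)$ is homotopically trivial and $e^n$ is a $\gamma$-point, completing the reduction.

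The main obstacle is this middle step: correctly identifying $\hat{C}_{e^n}^{X'}$ with the face poset of the boundary-minus-free-face $M$, and transferring collapsibility information from the closed cell down to $M$. The clean way to handle it is to notice that the \emph{same} free pair witnessing $K\searrow L$ also witnesses $\overline{e^n}\searrow M$ inside the contractible closed cell, so no delicate Mayer--Vietoris computation on $\dot{e}^n$ is required; h-regularity does all the work by forcing $\overline{e^n}$ to be contractible, and the finite-model theorem then carries the conclusion back to the associated finite space.
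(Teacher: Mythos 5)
Your proof is correct and follows essentially the same route as the paper's: reduce to a single elementary collapse, remove the free face as an up beat point, then use the collapse $\overline{e^n}\searrow \dot{e}^n\smallsetminus e^{n-1}$ together with contractibility of the closed cell (h-regularity) and the finite-model theorem to see that the top cell becomes a $\gamma$-point. Your explicit remarks that subcomplexes of h-regular complexes are h-regular and that $\gamma$-collapses compose are points the paper leaves implicit, but the argument is the same.
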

\begin{proof}
Assume $K=L\cup e^n \cup e^{n+1}$. Then $e^n$ is an up beat point of $\x (K)$.
Since $K\searrow L$, $\overline{e^{n+1}}\searrow L\cap \overline{e^{n+1}}= \dot{e}^{n+1} \smallsetminus e^n$. In particular $\dot{e}^{n+1} \smallsetminus e^n$ is contractible and then $$\hat{C}_{e^{n+1}}^{\x (K)\smallsetminus \{e^n\}}=\x (\dot{e}^{n+1} \smallsetminus e^n)$$ is homotopically trivial. Therefore  $$\x (K)\ce \x(K)\smallsetminus \{e^n\}\gammac \x(L).$$
\end{proof}

\end{document}